\documentclass[12pt]{amsart}

\pdfoutput=1

\usepackage{amsmath}
\usepackage{amsthm}
\usepackage[foot]{amsaddr}
\usepackage{amsfonts}
\usepackage{amssymb}
\usepackage{graphics}
\usepackage[mathscr]{euscript}
\usepackage{listings}
\usepackage{enumitem}
\usepackage{xcolor}
\usepackage{setspace}
\usepackage[colorlinks=true, linkcolor=red, citecolor=blue]{hyperref}
%\usepackage[style=ieee, isbn=true]{biblatex}
%\usepackage[scr]{rsfso}

%%%%%%%%%%%%%%%%%%%%%%%%%%%%%%%%%%%%%%%%%%%%%%%%%%%%%%%%
% ============ COMMANDS ================================
%%%%%%%%%%%%%%%%%%%%%%%%%%%%%%%%%%%%%%%%%%%%%%%%%%%%%%%%

\newcommand{\bb}[1]{\ensuremath{\mathbb{#1}}}
\newcommand{\bbm}[1]{\ensuremath{\mathbf{#1}}}
\newcommand{\ca}[1]{\ensuremath{\mathcal{#1}}}
\newcommand{\N}{\ensuremath{\mathbb{N}}}
\newcommand{\defeq}{\mathrel{\overset{\makebox[0pt]{\mbox{\normalfont\tiny\sffamily
					def}}}{=}}}

\newcommand{\E}[1]{\ensuremath{\mathbf{E}\left[#1\right]}}
\newcommand{\Var}[1]{\ensuremath{\mathbf{Var}\left[#1\right]}}
\newcommand{\Cov}[1]{\ensuremath{\mathbf{Cov}\left[#1\right]}}
\newcommand{\Bin}{\ensuremath{\text{Bin}}}

\newcommand{\adj}{\sim}
\newcommand{\given}{\ensuremath{\middle|}}

\renewcommand{\o}{\ensuremath{\text{o}}}

\newcommand{\I}{{\mathbf{I}}}
\renewcommand{\P}{{\mathbf{P}}}
\newcommand{\J}{{\mathbf{J}}}

\newcommand{\eps}{{\varepsilon}}

\newcommand{\abs}[1]{\ensuremath{\left| #1 \right|}}
\newcommand{\card}[1]{\ensuremath{\big| #1 \big|}}

\renewcommand{\Pr}{{\mathbf P}}
\DeclareMathOperator*{\Pro}{\mathbf{P}}
\newcommand{\univto}{\xrightarrow{\text{univ}}}

\renewcommand{\iff}{\Leftrightarrow}

\newcommand{\dif}{\operatorname{dif}}
\renewcommand{\epsilon}{\eps}

%%%%%%%%%%%%%%%%%%%%%%%%%%%%%%%%%%%%%%%%%%%%%%%%%%%%%%%%
% ============ CHARACTERS ==============================
%%%%%%%%%%%%%%%%%%%%%%%%%%%%%%%%%%%%%%%%%%%%%%%%%%%%%%%%

%\def\I#1{{\bf 1}_{#1}}

%%%%%%%%%%%%%%%%%%%%%%%%%%%%%%%%%%%%%%%%%%%%%%%%%%%%%%%%
% ============ THEOREM ENVIRONMENTS ====================
%%%%%%%%%%%%%%%%%%%%%%%%%%%%%%%%%%%%%%%%%%%%%%%%%%%%%%%%
\theoremstyle{plain}
\newtheorem{theorem}{Theorem}
\newtheorem{conjecture}[theorem]{Conjecture}

\newtheorem{lemma}[theorem]{Lemma}
\newtheorem{corollary}[theorem]{Corollary}

\theoremstyle{definition}
\newtheorem{definition}[theorem]{Definition}

%\bibliography{references}

%\renewcommand{\ttdefault}{pcr}
\lstdefinestyle{myStyle}{
language=Python,
%keywords={def,for,in,class,import,self,print,if,else,elif,return},
basicstyle=\footnotesize\tt,
keywordstyle=\color{orange!70!black},
commentstyle=\itshape\color{gray!60!black},
stringstyle=\color{green!50!black},
stepnumber=1,
tabsize=2,
numbers=none,
numberstyle=\tiny,
numbersep=5pt,
showspaces=false,
escapechar=`,
showstringspaces=false
}
\lstset{style=myStyle}

\include{psfig}

\setlength{\textwidth}{\paperwidth}
\addtolength{\textwidth}{-2.6in}
\calclayout

\title[The power of few]{Reaching a consensus on random networks: \linebreak the power of few}

\date{\today}

%\author{Van Vu \url{van.vu@yale.edu} \linebreak \and \linebreak Linh Tran  \url{l.tran@yale.edu}}

%\author{Van Vu \linebreak Linh Tran}

\author[Linh Tran]{Linh Tran}
\email{l.tran@yale.edu}
\address{Dept. of Mathematics, Yale University, 10 Hillhouse Ave, CT 06511}
\author[Van Vu]{Van Vu}
\email{van.vu@yale.edu}

% \swapnumbers
% \pagestyle{headings}
%\parindent = 5 pt
\parskip = 5 pt
\onehalfspacing
\allowdisplaybreaks

\begin{document}
	
%TODO mandatory: add short abstract of the document
\begin{abstract}
A community of $n$ individuals splits into two camps, Red and Blue. The individuals are connected by a social network, which influences their colors. Everyday, each person changes his/her color according to the majority among his/her neighbors. Red (Blue) wins if everyone in the community becomes Red (Blue) at some point. 

We study this process when the underlying network is the random Erdos-Renyi graph $G(n, p)$. With a balanced initial state ($n/2$ person in each camp), it is clear that each color wins with the same probability. 

Our study reveals that for any  constants $p$ and $\varepsilon$, there is a constant $C$ such that if one camp has  $n/2 +C$  individuals, then it wins with probability at least $1 - \varepsilon$. The surprising key fact here is that $C$ {\it does not} depend on $n$, the population of the community. When $p=1/2$ and $\varepsilon =.1$, one can set  $C$ as small as 6.  If the aim of the process is to choose a candidate, then this means it takes only $6$ ``defectors"  to win an election unanimously with overwhelming odd.
\end{abstract}

%TODO: The abstract in the comment below is a pure text version, useful for pasting into abstract sections for submissions.

%\begin{abstract}
%A community of n individuals splits into two camps, Red and Blue. The individuals are connected by a social network, which influences their colors. Everyday, each person changes his/her color according to the majority among his/her neighbors. Red (Blue) wins if everyone in the community becomes Red (Blue) at some point. 
%
%We study this process when the underlying network is the random Erdos-Renyi graph G(n, p). With a balanced initial state (n/2 person in each camp), it is clear that each color wins with the same probability. 
%
%Our study reveals that for any  constants p and epsilon, there is a constant C such that if one camp has  (n/2 + C)  individuals, then it wins with probability at least (1 - epsilon). The surprising key fact here is that C does not depend on n, the population of the community. When p = 1/2 and epsilon = .1, one can set  C as small as 6.  If the aim of the process is to choose a candidate, then this means it takes only 6 ``defectors"  to win an election unanimously with overwhelming odd.
%\end{abstract}

\maketitle

%%%%%%%%%%%%%%%%%%%%%%%%%%%%%%%%%%%%%%%%%%%%%%%%%%%%%%%%%%
% ========================================================
% ========= SECTION 1 ===================================
% ========= INTRODUCTION =================================
% ========================================================
%%%%%%%%%%%%%%%%%%%%%%%%%%%%%%%%%%%%%%%%%%%%%%%%%%%%%%%%%%

\newpage

\section {Introduction} \label{sec:introduction}

\subsection {The opinion exchange  dynamics} 

Building mathematical models to explain how collective opinions are formed is an important and interesting task (see \cite{mossel2017} for a survey on the topic, with examples from  various fields, economy, sociology, statistical physics, to mention a few). 

Obviously, our opinions are influenced by people around us, and this motivates the study of the following natural and simple model. 
  A community of $n$ individuals splits into two camps,  Red and Blue, representing two competing opinions, which can be on any topic such as brand competition, politics, ethical issues, etc. The individuals are connected by a social network, which influences  their opinion on a daily basis (by some specific rule). We say that Red (respectively Blue) \emph{wins} if everyone in the community becomes Red (respectively Blue) at some point.

We study  this process  when the underlying network is random. In this paper, we focus on the Erdos-Renyi random graph $G(n, p)$, which is the most popular model of random graphs  \cite{Bollobasbook, Jansonbook}. We use majority rule, which is also one of the most natural rules. When a new day comes, a vertex will take the color which is dominant among his neighbors. If there is a tie, it keeps its color.

\begin{definition} The random graph $G(n,p)$ on $n$ vertices is obtained by putting an edge between any two vertices with probability $p$, independently. 
\end{definition}

\subsection{Results}
With a balanced initial state ($n/2$ persons in each camp), by symmetry, each color wins with the same  probability $q < 1/2$, regardless of $p$.   (Notice that there are graphs, such as the empty and complete graphs, on which no one wins.)

Our study reveals  that for any given $p$ and $\eps$, there is  a constant $c$ such that if one camp has $\frac{n}{2} + c$ individuals at the initial state, then 
it wins with probability at least $1- \eps$.  The surprising fact here is that $c$ does not depend on $n$, the population of the community. When $p=1/2$ and $\eps =.1$, one can set 
$c$ as small as 6.

\begin{theorem} [The power of few]  \label{thm:const-advantage-ex}
	Consider the  (majority) process on $G(n, 1/2)$. Assume that the Red camp has at least  $\frac{n}{2} + 6$  vertices at the initial state, where $n \ge 550$. Then  Red wins after the fourth day with probability at least  $93\%$. 
\end{theorem}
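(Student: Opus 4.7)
I track the Red advantage $a_t \defeq |R_t| - n/2$ across the four days, starting from $a_0 \geq 6$. The plan is to show successively (i) $a_1 \geq c_1 \sqrt n$ with high probability; (ii) $|B_2|$ is essentially zero with high probability; and (iii) any remaining Blue vertices flip to Red on Day 3 (with Day 4 merely confirming stability). The amplification mechanism is the standard one for majority dynamics on $G(n,1/2)$: fluctuations of size $\Theta(\sqrt n)$ in each vertex's neighborhood combine with a population-level bias of size $\Theta(a_{t-1})$ to produce a per-vertex probability bias of order $\Theta(a_{t-1}/\sqrt n)$, hence a new population-level advantage of order $\Theta(a_{t-1} \sqrt n)$.

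\textbf{Stage 1 (Day 0 $\to$ Day 1).} For a fixed vertex $v$, the quantity (red neighbors) minus (blue neighbors) on Day 0 is a sum of $n-1$ independent bounded contributions with mean $\Theta(a_0)$ and variance $\Theta(n)$. A Berry-Esseen estimate gives $\Pr(v \in R_1) = \tfrac{1}{2} + \Theta(a_0/\sqrt n)$, hence $\E{a_1} = \Theta(a_0 \sqrt n)$. The indicators $\mathbf{1}[u \in R_1]$ and $\mathbf{1}[v \in R_1]$ are correlated only through the single shared edge $uv$: conditioning on $\mathbf{1}[u \adj v]$ makes them independent, so each pairwise covariance is $O(1/n)$ and $\Var{a_1} = O(n)$. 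Chebyshev's inequality then yields $a_1 \geq c_1 \sqrt n$ with high probability for an explicit $c_1 > 0$.

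\textbf{Stage 2 (Day 1 $\to$ Day 2).} Given $a_1 \geq c_1 \sqrt n$, each vertex's red-minus-blue neighbor difference has conditional mean $\Theta(c_1 \sqrt n)$ and variance $\Theta(n)$, so a Gaussian tail estimate yields $\Pr(v \in B_2) \leq 1 - \Phi(\Omega(c_1))$, which can be driven below $1/n$ provided $c_1$ is large enough. A union bound over $v$ then gives $|B_2| = 0$ with high probability. The key subtlety is that $R_1$ is determined by $G$, so the edges driving Day-2 updates are not independent of the Day-1 configuration. I would address this either by (i) a two-round edge exposure / sprinkling argument that reserves a small fraction of the graph randomness for the Day-2 step, or (ii) computing $\E{|B_2|}$ directly from the joint distribution of $(G, R_1)$ and applying Markov, with a second-moment step if tighter concentration is needed.

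\textbf{Stage 3 (Days 3 and 4), and main obstacle.} Once $|B_2|$ is small with high probability --- say $|B_2| \leq n/4$, which already follows from Stage 2 --- every vertex sees at most $|B_2|/2 + O(\sqrt n) \ll n/4$ Blue neighbors versus $\geq 3n/8 - O(\sqrt n)$ Red neighbors, so it becomes Red on Day 3; Day 4 confirms stability. The hardest part is Stage 2: we must propagate a $\sqrt n$-sized advantage into near-unanimity while handling the coupling between $R_1$ and $G$, and the constants must be tight enough for the quantitative targets ($a_0 = 6$, $n \geq 550$, success probability $93\%$). In particular, Stage 1's Berry-Esseen bound must be sharp enough to produce a $c_1$ large enough that $n\,(1 - \Phi(\Omega(c_1))) < 0.07$ already at $n = 550$, which forces explicit (rather than merely asymptotic) control of every constant in the argument.
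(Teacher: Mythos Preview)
Your Stage~1 is exactly the paper's Day-1 analysis: Berry--Esseen for $\Pr(v\in R_1)$, the covariance bound via conditioning on the single shared edge, and Chebyshev. The genuine gap is in Stage~2. Neither of your proposed fixes for the dependency between $R_1$ and $G$ works. Sprinkling is not available here: majority dynamics uses the \emph{same} edge set on every day, so you cannot reserve fresh randomness for Day~2 --- the edges you would ``reveal later'' have already been used to determine $R_1$. And ``computing $\E{|B_2|}$ directly from the joint distribution of $(G,R_1)$'' is precisely the hard part; you give no mechanism for it, and the obvious attempt (treat edges as fresh Bernoulli$(p)$ given $R_1$) is exactly what the dependency forbids.

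The paper's fix is a decoupling device it calls \emph{universal reduction}: one shows that with high probability over $G$ alone, \emph{every} coloring with at most $m_1$ Blue vertices evolves in one step to at most $m_2$ Blue vertices. This is proved by a union bound over all $\binom{n}{m_1}$ colorings and all $\binom{n}{m_2}$ candidate ``bad'' sets, combined with Hoeffding. Because the union bound costs a factor $4^n$, a $\Theta(\sqrt n)$ advantage on Day~1 can only be pushed to a $\Theta(n)$ advantage on Day~2 (not to $|B_2|\le n/4$, and certainly not to $|B_2|=0$). One then needs a second universal-reduction step (Day~3: linear advantage $\to$ $|B|\le rp(n-1)$) and finally a minimum-degree argument (Day~4: $|B|\le rp(n-1)\to 0$). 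So Day~4 is not ``merely confirming stability''; it is an active step, and your compressed 3-day schedule cannot be realized with these tools. Separately, your remark that $\Pr(v\in B_2)$ ``can be driven below $1/n$ provided $c_1$ is large enough'' overlooks that $c_1$ is not a free parameter: it is pinned down by the Day-1 analysis with $a_0=6$, and the tight budget ($P_1\approx 0.069$ already at the paper's chosen thresholds) leaves no room to inflate it.
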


This result can be stated without the Erd\"os-Renyi model; one can state an equivalent theorem by choosing a graph, uniformly, from the set of all graphs on $n$ vertices,
to be the network. 

This  result reveals an interesting phenomenon, which we call ``the power of few". 
The collective outcome can be extremely sensitive, as a  modification of the smallest scale  in the initial setting leads to 
the opposite outcome.  

Our result applies in the following equivalent settings.

{\it Model 1.} We fix the two camps of size $n/2 +6$ and $n/2-6 $, respectively,  and draw a random graph on their union.

{\it Models 2.} Draw a random graph first, let Red be a random subset of $n/2 +6$ vertices (chosen uniformly from all subsets of that size), and Blue be the rest. 

{\it Model 3.}  Split the society into two camps of size $n/2$ each. Draw the random graph on their union, then recolor 6 random selected Blue vertices to Red.

{\it Model 4.} Split the society into two camps (Red and Blue) of size $n/2-6$ each and a ``swinging" group (with no color yet) of 12 individuals. Draw the random graph on their union. Now let the swinging group join the Red camp. 

With Model 3, we can imagine a balanced election process at the beginning. Then 6 persons change camp. This tiny group already guarantees the final win with an overwhelming odds. Similarly, Model 4 asserts that a swinging group of size 12 decides the outcome. 

Our result can also be used to model the phenomenon that outcomes  in seemingly identical 
situations can be   the opposites. Consider two communities, each has exactly $n$ individuals and shares  the same social network. In the first community, Red camp  has 
size $n/2 +6$, and Blue camp has $n/2-6$. In the second community, Blue camp has $n/2+6$ and Red camp has $n/2-6$. If $n$ is large, then 
there is no way to tell the difference between the two communities. Even if we record everyone's initial opinion,  clerical errors will surely swallow the tiny difference
of 12.  However, at the end, the collective opinion  will be opposite, with high probability.

Now we state the general result for arbitrary constant density $p$. 

\begin{theorem}[Asymptotic bound] \label{thm:unbounded-advantage}
Let $p$ be a constant in $(0, 1)$ and $c_n$ be a positive integer which may depend on $n$
%such that $4c_n\sqrt{p} > \sqrt{6\pi\log 2}$ for sufficiently large $n$
.
Assume that Red has $n/2 +c_n$ individuals in day zero and the random graph is $G(n,p)$. 
Then  Red wins after the fourth day with probability at least $1 - K(p) \max \{n^{-1}, c_n^{-2} \} $, where $K(p)$ depends only on $p$. 
\end{theorem}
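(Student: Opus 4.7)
The plan is to follow the Red advantage $a_k := |R_k| - n/2$ through four days: starting from $a_0 = c_n$, I expect $a_1 = \Theta(c_n \sqrt{n})$ with probability $1 - O(c_n^{-2})$, then $a_2 = \Omega(n)$ with comparable probability, and finally $a_3, a_4 = n$ up to failure probability $O(n^{-1})$ via Chernoff bounds. The sum of these three failure probabilities matches the stated bound $K(p)\max\{n^{-1}, c_n^{-2}\}$.

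Stage 1 (day 1) I would handle by the second-moment method. Writing $|R_1| = \sum_v Y_v$ with $Y_v = \mathbf{1}\{v \in R_1\}$, I would use a Berry--Esseen estimate for the binomial difference $R_0(v) - B_0(v)$ (whose mean is $\Theta(p c_n)$ and variance is $\Theta(np(1-p))$) to obtain $\E{|R_1|} \geq n/2 + \alpha_p c_n \sqrt{n}$ for an explicit $\alpha_p = \alpha_p(p) > 0$. The variance hinges on the fact that $Y_v$ and $Y_w$ share only the single edge $vw$: conditioning on $\mathbf{1}\{vw\}$ yields $\Cov{Y_v, Y_w} = p(1-p)\Delta_v \Delta_w$, where $\Delta_v$ is the sensitivity of $Y_v$ to toggling $\mathbf{1}\{vw\}$. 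A local CLT gives $|\Delta_v| = O(1/\sqrt{n})$, so $\Var{|R_1|} = O(n)$, and Chebyshev yields $a_1 \geq \alpha_p c_n \sqrt{n}/2$ with probability $1 - O(c_n^{-2})$.

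Stage 2 (day 2) is where I expect the main obstacle. Heuristically, iterating the Stage 1 calculation with day 1 as a fresh initial condition predicts $a_2 = \Omega(a_1 \sqrt{n}) = \Omega(c_n n)$, a linear advantage; but the day-1 coloring is a deterministic function of the same random graph that drives day 2, so the clean star structure used in Stage 1 no longer applies directly. My plan is to isolate the ``sensitivity set'' $\mathcal{S} := \{u : |R_0(u) - B_0(u)| \leq K\}$ of day-1-unstable vertices for a small constant $K$: by local CLT, $\E{|\mathcal{S}|} = O(\sqrt{n})$ and it concentrates at this scale. Outside $\mathcal{S}$, day-1 colors are robust under single-edge perturbations, so the covariances $\Cov{Y_v^{(2)}, Y_w^{(2)}}$ pick up significant contributions only from edges in the two-neighborhoods through $\mathcal{S}$. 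The technical heart of the proof is the quantitative control of these contributions; the target bound is $\Var{|R_2|} = o(n^2)$, which via Chebyshev gives $a_2 \geq \Omega(n)$ with probability $1 - O(c_n^{-2})$.

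Stages 3 and 4 are then routine Chernoff arguments. Once $a_2 \geq \delta n$ for a constant $\delta = \delta(p) > 0$, each vertex $v$ has $\E{R_2(v) - B_2(v)} \geq 2\delta n p$ and variance $\Theta(np)$, so $v \in R_3$ with probability $1 - e^{-\Omega(n)}$; a union bound gives $|R_3| \geq n - O(1)$ with probability $1 - O(n^{-1})$. Any remaining Blue vertex on day 3 sees a $(1-o(1))$-Red majority among its neighbors and flips to Red on day 4. Summing the failure probabilities from Stages 1 through 4 yields the claimed bound.
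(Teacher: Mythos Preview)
Your Stage 1 is essentially the paper's Day 1 argument: Berry--Esseen for the mean, the single-shared-edge covariance bound $\Cov{\I_1(u),\I_1(v)}=O(1/n)$, then Chebyshev. That part is fine and matches the paper.

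The genuine gap is in Stages 2--4. You correctly identify the obstacle for Stage 2---the day-$1$ coloring is a function of the same edges that drive day~$2$---but your proposed fix (isolating a sensitivity set $\mathcal{S}$ and arguing that covariances of $Y_v^{(2)}$ are small) is only a target, not an argument; you yourself call the required variance bound ``the technical heart of the proof'' and do not supply it. More seriously, the \emph{same} dependence issue recurs in Stages 3 and 4, where you apply Chernoff to $R_2(v)-B_2(v)$ as if the edges were still i.i.d.\ $\mathrm{Ber}(p)$ given the day-$2$ coloring. They are not: conditioning on $|R_2|\ge (1/2+\delta)n$ (or on the actual coloring $R_2$) biases the edge distribution, so the naive Chernoff step is unjustified as written.

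The paper sidesteps all of this with a single device it calls \emph{universal reduction}: rather than tracking the random coloring $B_t$, it proves that with high probability the graph $G$ has the deterministic property that \emph{every} coloring with $|B|\le m_1$ evolves in one step to $|B'|\le m_2$. For a fixed coloring with advantage $\Theta(\sqrt n)$ (respectively $\Theta(n)$), Hoeffding gives each candidate ``bad'' set $S$ probability $e^{-\Omega(n)}$ of surviving; a brute union bound over the at most $4^n$ pairs (coloring, $S$) still leaves failure probability $o(n^{-2})$. This makes Days 2--4 essentially free---all the $c_n^{-2}$ in the final bound comes from Day~1---and completely eliminates the dependence problem you are trying to control. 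Your sensitivity-set program might be salvageable, but it is strictly harder than what the paper does, and the Chernoff steps in your Stages 3--4 need either the same universal-reduction idea or a comparable decoupling argument to be valid.
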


Both  results follow from Theorem \ref{thm:const-advantage},  which, in a slightly technical form, describes how the process evolves day by day. 
Our results can be extended to cover the case when there are more than 2 opinions; details will appear in a later paper \cite{TranVusparse}.

\subsection{Related results and Sparse graphs} 

The majority dynamics on random graphs has been studied recently by many researchers \cite{mossel2017, mossel2014, fountoulakis, benjamini}.  The key difference, as compared to our study,  is in the set-ups. In these earlier papers, each individual chooses his/her initial color uniformly at random. The central limit theorem thus guarantees that with high probability, the initial difference between the two camps is of order $\Theta (\sqrt n) $. Therefore, these papers did not touch upon the ``power of few" phenomenon, which is our key message. On the other hand,  they considered sparse random graphs where the density $p$ goes to zero with $n$. 

In \cite{benjamini}, Benjamini, Chan, O'Donnell, Tamuz, and Tan considered random graphs with $p \ge \lambda n^{-1/2} $, where $\lambda $ is a sufficiently large constant, and showed that the dominating color wins with probability at least $.4$ \cite[Theorem 1.2]{benjamini}, while conjecturing that this probability in fact tends to $1$ as $n \to \infty$. This conjecture was proved by Fountoulakis, Kang, and Makai
\cite[Theorem 1.1]{fountoulakis}.

\begin{theorem}  \label{FKM} 
For any $0< \eps \le 1$ there is $\lambda = \lambda(\eps)$ such that the following holds for $p \ge \lambda n^{-1/2} $. 
 With probability at least $1 -\eps$, over the choice of the random graph $G(n,p)$ 
	and the choice of the initial state,  the dominating color wins after four days. 
\end{theorem}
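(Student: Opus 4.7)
The plan is to track the signed advantage $s_t := R_t - B_t$ day by day and show, with probability at least $1-\eps$ over the joint randomness of the graph and the initial coloring, that $|s_t|$ is multiplied by a factor of order $\sqrt{np}$ each day until it saturates at $n$. By Berry--Esseen on the $n$ independent fair coin flips defining the initial coloring $\sigma^{(0)}$, we have $|s_0| \ge c_\eps \sqrt{n}$ with probability at least $1-\eps/4$, where $c_\eps > 0$ depends only on $\eps$; assume without loss of generality that $s_0 > 0$. Since $np \ge \lambda \sqrt{n}$, the per-day growth factor $\sqrt{np}$ is at least $\lambda^{1/2} n^{1/4}$, so two amplification steps already push $|s_t|$ past any fixed fraction of $n$, and the remaining two days are used to reach exact unanimity.

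The core one-day lemma is the following. Condition on the current coloring $\sigma^{(t)}$ and on a suitable $\sigma$-algebra of already-exposed edges; for each vertex $v$ the updated color is $\sigma_v^{(t+1)} = \sign(M_v^{(t+1)})$, where $M_v^{(t+1)} = \sum_{u} A_{uv}\,\sigma_u^{(t)}$ is a sum of (conditionally) independent $\pm 1$-valued variables with mean $\approx p\,s_t$ and variance $\approx np(1-p)$. Whenever $np \to \infty$ and $|s_t|/n$ is bounded away from $1$, the central limit theorem for $M_v^{(t+1)}/\sqrt{np}$ yields
\[
\E{\sigma_v^{(t+1)} \mid \sigma^{(t)}} \;\approx\; \sqrt{\tfrac{2p}{\pi n}}\, s_t,
\]
and summing over $v$ gives $\E{s_{t+1} \mid \sigma^{(t)}} \approx \sqrt{2np/\pi}\, s_t$, the promised amplification.

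Concentration of $s_{t+1}$ around its conditional mean comes from exposing edges in a Doob-martingale order and invoking the bounded-differences inequality: flipping a single edge changes at most the two endpoint colors, hence $s_{t+1}$ by at most $4$, so McDiarmid's inequality gives $|s_{t+1} - \E{s_{t+1} \mid \sigma^{(t)}}| = O(\sqrt{n \log n})$ with probability $\ge 1 - \eps/16$. This error is negligible compared with $\sqrt{np}\, s_t$ as soon as $s_t \gtrsim \sqrt{n \log n / p}$, a bound that is in force from day $1$ onward. Iterating three times pushes $s_t$ above $(1-\delta)n$ for any fixed small $\delta$; at that point a direct Chernoff bound on the neighborhood of each remaining Blue vertex shows that every such vertex sees a Red supermajority and flips on the fourth day, yielding unanimity.

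The main technical obstacle is making the iteration honest: the colorings $\sigma^{(1)}, \sigma^{(2)}, \ldots$ are all measurable with respect to the same random graph, so conditioning on $\sigma^{(t)}$ already reveals a great deal about the edges, and a naive second application of the single-day lemma risks spending randomness it no longer has. The standard remedy is a sprinkling-style decomposition $p = p_1 + p_2 + p_3 + p_4$ coupled with an explicit comparison to the process on $G(n,p_1) \cup \cdots \cup G(n,p_4)$, so that each day's concentration step can draw on a genuinely fresh edge batch; since the majority rule is non-monotone, this coupling must be set up to preserve the day-$t$ coloring exactly and inject freshness only when computing day $t{+}1$. A secondary subtlety --- the CLT approximation degrading once $|s_t|$ is a constant fraction of $n$ --- is handled by switching from the Gaussian estimate to Chernoff bounds as soon as the signal-to-noise ratio becomes large.
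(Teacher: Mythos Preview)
Your proposal has two genuine gaps, and it also diverges substantially from what the paper does.

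\textbf{Gap 1: the McDiarmid step is off by a factor of $\sqrt{n}$.} You expose the $\binom{n}{2}$ edge indicators one at a time and note that flipping one edge changes $s_{t+1}$ by at most $4$. Azuma--Hoeffding then gives deviations of order $\sqrt{\sum_i c_i^2} = O\!\left(\sqrt{n^2}\right) = O(n)$, not $O(\sqrt{n\log n})$. An $O(n)$ error swamps the signal $\sqrt{np}\,s_t$ until $s_t$ is already of order $n/\sqrt{np}$, so the iteration never gets started. The right concentration for the first amplification step comes from bounding the pairwise covariances $\Cov{\I_1(u),\I_1(v)}=O(1/n)$ directly (this is what the paper does in Section~\ref{sec:firstday-const}) and applying Chebyshev, which does give $O(\sqrt{n})$ fluctuations.

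\textbf{Gap 2: sprinkling does not rescue the iteration.} You correctly identify the dependence across days as the main obstacle and propose decomposing $p=p_1+\cdots+p_4$, but then concede that because majority is non-monotone the coupling ``must be set up to preserve the day-$t$ coloring exactly and inject freshness only when computing day $t{+}1$.'' There is no such coupling: the day-$t$ coloring is a deterministic function of \emph{all} the edges, so conditioning on it already reveals information about every $p_j$-layer. The paper's way around this (Section~\ref{sec:universal-win}) is not sprinkling but \emph{universal reduction}: once the advantage is $\Theta(\sqrt{n})$, one takes a union bound over all $\le 4^n$ possible colorings and shows that the graph deterministically shrinks any sufficiently small Blue set. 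This pays a factor $4^n$ but the Hoeffding exponent in Lemma~\ref{lem:uni-bad-set-bound} is $\Theta(n)$ once the imbalance is $\Theta(\sqrt{n})$, so the union bound survives.

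\textbf{Comparison with the paper.} The paper does not attempt your direct day-by-day amplification for Theorem~\ref{FKM}. It simply invokes Theorem~\ref{thm:general-sparse} (whose proof is deferred to \cite{TranVusparse}) as a black box: Berry--Esseen gives $|s_0|\ge c'\sqrt{n}$ with probability $\ge 1-\eps/2$, and for $p\ge \lambda n^{-1/2}$ with $\lambda$ large this exceeds the threshold $c/p$ required by Theorem~\ref{thm:general-sparse}, which then finishes the job. Your outline is closer in spirit to how one would actually prove Theorem~\ref{thm:general-sparse}, but as written it does not go through.
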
 

For related results on random regular graphs, see \cite{mossel2014, mossel2017}. 
Building upon ideas developed in this paper,  we can extend our main theorem to accommodate sparse graphs as follows.

\begin{theorem} \label{thm:general-sparse}
For any $0< \eps \le 1$ there is $c = c (\eps)$ such that the following holds for $p \ge (2+\o(1)) (\log n)/n$. 
Assume that Red camp has size at least $n/2 + c/p$ initially, then it wins with probability at least $1 -\eps$. 
\end{theorem}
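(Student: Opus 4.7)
The plan is to mimic the strategy behind Theorem~\ref{thm:const-advantage} (which yields Theorems~\ref{thm:const-advantage-ex} and~\ref{thm:unbounded-advantage} for $p$ bounded away from $0$), tracking every quantity in the sparse scaling. The right normalization is dictated by the identity $(\text{initial Red}-\text{initial Blue})\cdot p = 2c$: if Red starts with $n/2+c/p$ vertices, then a typical vertex has an \emph{expected} excess of $2c$ Red over Blue neighbors, independent of $p$. This is exactly the regime in which a Berry--Esseen estimate for the binomial difference yields a constant-order bias in the day-one majority decision, no matter how sparse the graph is, as long as $np\to\infty$.

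Concretely, on day $1$ I condition on the initial partition and, for each vertex $v$, write the number of its Red and Blue neighbors as two independent binomials with variance sum $\asymp np$ and mean difference $\asymp 2c$. Berry--Esseen then gives
\begin{equation*}
\Pr\bigl[v \text{ is Red on day }1\bigr] \;=\; \tfrac{1}{2} + \frac{2c + \o(1)}{\sqrt{2\pi\, np}}.
\end{equation*}
Summing over $v$ and controlling the variance of the new Red-count via a second-moment computation (as in the dense proof, the only correlations between distinct vertices come from shared edges and are lower order) yields, with probability $1-O(c^{-2})$, a day-one advantage $a_1\gtrsim c\sqrt{n/p}$. Days $2$ and $3$ then amplify the advantage by a further factor of order $\sqrt{np}$ per step: as long as the normalized bias $a_k\sqrt{p/n}$ is small, the same Berry--Esseen computation gives $a_{k+1}\gtrsim a_k\sqrt{np}$. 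Since $p\ge (2+\o(1))(\log n)/n$ means $np\gtrsim 2\log n$, a constant number of iterations is enough to push the advantage above $\sqrt{n\log n/p}$, that is, to the point where the neighbor-level bias at every single vertex exceeds $\sqrt{np\log n}$. A final Chernoff estimate then shows that each vertex is Red except with probability $\o(1/n)$, and a union bound over all $n$ vertices forces unanimity.

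The assumption $p\ge (2+\o(1))(\log n)/n$ is spent in a sharp way at the very last step: it guarantees the absence of isolated vertices (without which unanimity is impossible) and provides the concentration of minimum degree needed for the final union bound to close. The principal technical obstacle, as in the dense case, is handling the correlations between the events ``$v$ is Red on day $k$'' for distinct vertices $v$, which share the same $G(n,p)$-randomness. One can no longer expose edges in fresh batches on successive days; instead the day-$1$ randomness has to be re-used, and one must show that the conditional day-$k$ bias at each vertex concentrates around its mean. In the sparse regime this forces the Berry--Esseen error terms to remain lower order relative to the main bias $2c/\sqrt{2\pi\, np}$, which is precisely the quantitative content that the hypothesis $p\ge (2+\o(1))(\log n)/n$ purchases. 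A secondary difficulty is the few vertices of near-minimum degree, which I would handle by first establishing a high-probability lower bound on the minimum degree and then absorbing those vertices into the final union bound as a separate low-probability event.
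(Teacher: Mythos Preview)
The paper does not prove Theorem~\ref{thm:general-sparse}: immediately after the statement it says ``The proof, along with additional information, including the length of the process and the explicit relation between $\eps$ and $c$, will appear in a subsequent paper~\cite{TranVusparse}.'' There is therefore no proof in the present paper to compare your proposal against.

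It is still worth locating your sketch relative to the dense-case machinery that \emph{is} in the paper. In the proof of Theorem~\ref{thm:const-advantage}, the Berry--Esseen/Chebyshev step is used \emph{only} on day~$1$; days~$2$ through~$4$ are handled by \emph{universal reduction} (Definition~\ref{defn:uni-reduce} and Lemmas~\ref{lem:degree-bound}--\ref{prop:uni-linear-advantage}), which are statements about the random graph that hold simultaneously for \emph{every} coloring, so the dependence between the day-$k$ coloring and the edge set never enters. Your proposal instead iterates the Berry--Esseen amplification on days~$2$ and~$3$, and the difficulty you call ``the principal technical obstacle'' is exactly the one the paper's dense argument was engineered to avoid. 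You flag the obstacle but do not supply a mechanism to overcome it---``one must show that the conditional day-$k$ bias at each vertex concentrates around its mean'' is a statement of the goal, not a method---so as written the sketch has a real gap at that step. Note also that a naive port of the paper's universal-reduction lemmas to the sparse regime fails for a different reason: the Hoeffding exponent in Lemma~\ref{lem:uni-bad-set-bound} carries a factor $p^2(n-2n_0-1)^2$, and with a day-$1$ advantage of order $c\sqrt{n/p}$ this yields an exponent of order $c^2pn$, too small to beat the $4^n$ union-bound cost once $p=\o(1)$. Whatever the deferred proof in~\cite{TranVusparse} does, it must repair one of these two routes.
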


The proof, along with additional information, including the length of the process and the explicit relation between $\eps$ and $c$, will appear in a subsequent paper \cite{TranVusparse}. 
Notice that when $p$ is a constant, this result covers the ``Power of Few" phenomenon 
 as a special case, albeit with potentially larger $c$. The result no longer holds for $p < (\log n)/n$ as in this case there are, with high probability, isolated vertices. Any of these vertices  keeps it original color forever. In this case, the number of Blue vertices  converges, and we obtain a bound on the limit in \cite{TranVusparse}.

One can use  Theorem \ref{thm:general-sparse} to derive a ``delayed" version  (in which Red may need more than 4 days) of Theorem \ref{FKM}, as follows.

\begin{proof}[Proof of Theorem \ref{FKM}]
Assume Theorem \ref{thm:general-sparse}. Let $R_0$ and $B_0$ respectively be the initial Red and Blue camps.
Fix  a constant $0 < c' \le \eps/6$. $\card{R_0}\sim \Bin(n, 1/2)$ since it is a sum of $\Bin(1, 1/2)$ variables. An application of the Berry-Esseen theorem (Corollary \ref{lem:strongesseen}; with $C_0 =.56$) implies that 
\begin{equation*}
\Pr \left(\card{R_0} - \frac{n}{2} \le c'\sqrt{n} \right) \le \Phi(2c') + \frac{C_0}{\sqrt{n}},
\end{equation*}
and 
\begin{equation*} 
\Pr \left(\card{R_0} - \frac{n}{2} \le -c'\sqrt{n} \right) \ge \Phi(-2c') - \frac{C_0}{\sqrt{n}} , 
\end{equation*} 
Thus
\begin{equation*}
\begin{aligned}
& \Pr \left(\abs{\card{R_0} - \frac{n}{2}} \le c\sqrt{n} \right)
\ \le \ \left(\Phi(2c') + \frac{C_0}{\sqrt{n}}\right)
- \left(\Phi(-2c') - \frac{C_0}{\sqrt{n}}\right) \\
& \ \le \ \Phi(-2c', 2c') + \frac{2C_0}{\sqrt{n}}
\ \le \  \frac{4c'}{\sqrt{2\pi}} + \frac{2C_0}{\sqrt{n}}
\ \le \ \frac{\eps}{3} + \frac{2C_0}{\sqrt{n}} \le \epsilon/2, 
\end{aligned}
\end{equation*} for sufficiently large $n$.

On the other hand, if $\abs{\card{R_0} - n/2} > c'\sqrt{n}$, then one of the sides has more than $n/2 + c' \sqrt{n}$ initial members, which we call the \emph{majority side}.
Now we apply Theorem \ref{thm:general-sparse} with $\epsilon$ replaced by $\epsilon/2$. 
Notice that in the setting of Theorem \ref{FKM} if  we have $p =\lambda n^{-1/2} $ for $\lambda$ sufficiently large, then $c' \sqrt n \ge c/ p$, where $c$ is the constant in Theorem \ref{thm:general-sparse}. Thus, by  this theorem,  the probability for the majority side to win is at least  $1-\eps/2$, and we are done by the union bound.

\end{proof}

\subsection {Notation}

\begin{itemize}
	\item $R_t, B_t$: Respectively the sets of Red and Blue vertices after day $t$. (At this point each person has updated his color $t$ times.)
%	\item $d^R_t(u) \defeq \abs{\Gamma(u) \cap R_t}$ and $d^B_t(u) = \abs{\Gamma(u) \cap B_t}$: Respectively the Red and Blue degrees of vertex $u$.
%	\item $\subs_k(X) \defeq \left\{ Y\subseteq X : \abs{Y} = k \right\}$: Collection of all subsets of $X$ with $k$ members.
	\item $\I_t(u) \ \defeq \ \bbm{1}_{\{u\in R_t\}}$: $\{0, 1\}$-indicator of the event that $u$ is Red after day $t$.
	\item $\J_t(u) \ \defeq \ 2\I_t(u) - 1$: $\{-1, 1\}$-indicator of the same event.
	\item $u\adj v \equiv (u, v)\in E$: Event that $u$ and $v$ are adjacent.
	\item $\Gamma(v) \ \defeq \ \{u: u\adj v\}$: The neighborhood of $v$.
	\item $W_{uv} \ \defeq \ \mathbf{1}_{\{u\adj v\}}$ - Indicator of the adjacency between $u$ and $v$.
	
	% \item r.v.: Short form for \textit{random variable}.
	% \item $\brv{a}{p}{b}{1-p}$: Bernoulli random variable whose value is $a$ with probability $p$ and $b$ with probability $1-p$.
	%	\item PDF, CDF: Short form for \textit{Probability Density Function} and \textit{Cumulative Distribution Function}.
	\item $\ca{N}(\mu, \sigma^2)$: The Normal Distribution with mean $\mu$ and variance $\sigma^2$.
	%	\item $\varphi(x) \defeq \frac{1}{\sqrt{2\pi}}e^{-\frac{x^2}{2}}$: The PDF of the \textit{Standard Normal Distribution} $\ca{N}(0, 1)$.
	\item $\Phi(a, b) \ \defeq \ (2\pi)^{-1/2}\int_a^be^{-\frac{x^2}{2}}dx\ $ and $\ \Phi(a) \defeq \Phi(-\infty, a), \quad \Phi_0(a) \defeq \Phi(0, a)$.
%	\item $\Pr(A(G))$: Probability of an event $A$ on the random graph $G$, with $G$ following the $G(n, p)$ distribution.
\end{itemize}

\subsection{Main Theorem} \label{subsec:main-theorem}

%There are two main results in this paper. The first asserts the convergence to a full Red coloring in Day Four with high probability under broad assumptions about $p$ and the initial advantage, and are meant to be useful for sparse random graphs whose expected degrees range from $\text{O}(\sqrt{n})$ to $\text{o}(n)$.
%
%
%The second theorem improves the bound above for denser graphs, where $p$ is at least a constant. When given appropriate parameter values, it implies the ``Power of Few'' phenomenon in Theorem \ref{thm:const-advantage-ex}.

The main theorem concerns dense graphs, where $p$ is at least a constant.  When given appropriate parameter values, it implies the ``Power of Few'' phenomenon in Theorem \ref{thm:const-advantage-ex}.
Before stating the theorem, we define some expressions.
\begin{align*}
C_0 \defeq 0.56, & \quad C_1 \defeq \sqrt{3\log 2}, \quad \sigma = \sigma(p) \defeq \sqrt{p(1 - p)}, \\
%T_1 = T_1(p, c) & \ \defeq \ \frac{2pc + \min\{p, 1-p\}}{\sigma}, \\
%T_2 = T_2(p, c) & \ \defeq \ \frac{T_1}{\sqrt{2\pi}} - \frac{C_0 (1 - 2\sigma^2)}{\sigma}, \\
P_1 = P_1(n, p, c, \eps_2) & \ \defeq \ \frac{1/4 + 4C_0^2\left(1 - 2\sigma^2\right)^2\cdot \frac{n - 1}{n - 2}} {\left(\sqrt{n - 1} \Phi_0 \left( \frac{2pc + \min\{p, 1- p\}}{\sigma \sqrt{n - 1}} \right) - C_0\frac{1 - 2\sigma^2}{\sigma} - \frac{C_1}{p} - \eps_2 - \frac{1}{2\sqrt{n}}\right)^2}, \\
P_2 = P_2(n, p, \eps_2, \eps_1) & \ \defeq \ \frac{1}{n}\exp\left(-\frac{8n}{3} \left[(1 - 2\eps_1)p\eps_2(p\eps_2 + C_1) - \eps_1\log 2\right] \right), \\
P_3 = P_3(n, p, \eps_1, r) & \ \defeq \ \frac{1}{n}\exp\left(-\frac{2rp^3(2\eps_1 n - 1)^2}{1 + rp} - 2n\log 2\right), \\
P_4 = P_4(n, p, r) & \ \defeq \ n\exp\left(- (1 - 2r + 2r\log 2r)p (n-1)\right).
\end{align*}

\begin{theorem} \label{thm:const-advantage}
	Let $p\in (0, 1)$, $c \in \bb{N}$, $n\in \bb{N}$, $r \in \left(0, \frac{1}{2}\right)$ and $\eps_1, \eps_2 >0$. Let $C_0, C_1$, $\sigma$, $T_1, T_2$, $P_1, P_2, P_3, P_4$ as above. Assume that 
	\begin{equation} \label{eq:n-eps1-eps2-conds}
	2\sqrt{n - 1} \Phi_0 \left( \frac{2pc + \min\{p, 1- p\}}{\sigma \sqrt{n - 1}} \right) > \frac{C_1}{p} + 2\eps_2 + \frac{1}{\sqrt{n}} \quad \text{ and } \quad 2\eps_1n > 1.
	\end{equation}
	With $n^R, n^B$ being integers such that $n^R + n^B = n$ and $1\le n^B \le \frac{n}{2} - c$,  the  process on  $G\sim G(n, p)$ with $|B_0|= n^B$  satisfies the following
	\begin{enumerate}
	\item With  $\ n^B_0 \defeq \frac{n}{2} - c$, $\ n^B_1 \defeq \frac{n - 1}{2} - \left(\frac{C_1}{2p} + \eps_2\right) \sqrt{n}$, $\ n^B_2 = \left(\frac{1}{2} - \eps_1\right)n$, $\ n^B_3 = rp(n - 1)$,  $\ n^B_4 = 0$, 
	\begin{equation*}
	\quad \P \left(\card{B_t} \le n^B_t\ \given \ \card{B_{t-1}}\le n^B_{t-1} \right) \ge 1 - P_t \ \text{ for each } \ t=1,2,3,4.
	\end{equation*}
	\item $ \displaystyle
	\P \left(R_4 = V(G) \ \given \  |B_0| = n^B \right) \ge
	1 - (P_1 + P_2 + P_3 + P_4).
	$
\end{enumerate}
\end{theorem}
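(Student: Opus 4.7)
The plan is to prove part (1) day by day, using progressively stronger concentration inequalities as the Blue lead erodes, and then combine the four bounds by a union bound to obtain part (2). Setting $A_t \defeq \{|B_t| \le n^B_t\}$, with $A_0$ given by hypothesis, a telescoping argument yields
\[
\Pr(R_4 = V(G) \mid A_0) \ge \Pr\left(\bigcap_{t=1}^{4} A_t \,\bigg|\, A_0\right) \ge 1 - \sum_{t=1}^{4} P_t,
\]
provided each single-step bound $\Pr(A_t^c \mid A_{t-1}) \le P_t$ actually holds pointwise for every realization of the state up to day $t-1$ consistent with $A_{t-1}$. I will verify each $P_t$ estimate in this stronger pointwise form, which lets the telescoping go through despite the absence of a true Markov property (the same underlying graph governs every day).

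The day-$1$ bound is by far the most delicate: we must extract a lead of order $\sqrt{n}$ from the initial constant lead $c$. For a vertex $u$ and any fixed $B_0$ with $|B_0| \le n/2 - c$, the number of Blue neighbors of $u$ is a sum of i.i.d.\ Bernoulli$(p)$ variables, and the event $u\in R_1$ is determined by whether this sum falls below the majority threshold (the tie-breaking rule is what produces the $\min\{p, 1-p\}$ term inside $\Phi_0$). The Berry--Esseen theorem with constant $C_0 = 0.56$ turns this per-vertex probability into a lower bound of the form
\[
\E{|R_1|} - (n - n^B_1) \ge \sqrt{n-1}\,\Phi_0\!\left(\frac{2pc + \min\{p,1-p\}}{\sigma\sqrt{n-1}}\right) - C_0\,\frac{1-2\sigma^2}{\sigma} - \frac{C_1}{p} - \eps_2 - \frac{1}{2\sqrt{n}},
\]
which is precisely the term appearing inside the square in the denominator of $P_1$. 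The concentration of $|R_1|$ around this mean is then obtained by Chebyshev's inequality; the key observation for $\Var{|R_1|} = \sum_u \Var{\I_1(u)} + \sum_{u \ne v} \Cov{\I_1(u), \I_1(v)}$ is that for $u \ne v$ the two indicators share only the single edge $W_{uv}$, so conditioning on $W_{uv}$ decorrelates them. A Berry--Esseen estimate on the influence of that single edge produces covariances which sum to the $4C_0^2(1-2\sigma^2)^2 \cdot \tfrac{n-1}{n-2}$ piece of the numerator of $P_1$, while the per-vertex variances contribute the $1/4$ piece.

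For days $2$ and $3$ the Blue lead has grown to $\Omega(\sqrt{n})$ and then $\Omega(n)$, so exponential concentration replaces Chebyshev. On day $2$, for any fixed $B_1$ with $|B_1| \le n^B_1$ and any vertex $v$, a Bernstein-type bound with the characteristic $\tfrac{8}{3}$ variance constant shows $\Pr(v \in B_2) \le \exp(-\Theta(n))$, and Markov's inequality on $\E{|B_2|}$ produces the $\tfrac{1}{n}\exp(\cdot)$ form of $P_2$. On day $3$, the gap is now linear and I would instead union-bound over all candidate Blue subsets of size $rp(n-1)$, which contributes the $-2n\log 2$ term in the exponent of $P_3$; a Chernoff estimate on the number of edges from such a putative $B_3$ into $B_2$ supplies the dominant exponential rate $2rp^3(2\eps_1 n - 1)^2/(1+rp)$.

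Finally, for day $4$, given $|B_3| \le rp(n-1)$ with $r < \tfrac{1}{2}$, for any vertex $v$ the expected number of its Blue neighbors is at most $rp \cdot p(n-1)$, strictly below the majority threshold $p(n-1)/2$ by a factor of $2r<1$; a Chernoff bound on a Binomial tail yields the relative-entropy rate $1 - 2r + 2r\log(2r)$, and a union bound over the $n$ vertices gives $P_4$. I expect the main technical obstacle to lie in the careful bookkeeping of correlations introduced by the shared graph: to make the telescoping argument of part (2) valid, each single-step bound must be proved uniformly in the realization of $B_{t-1}$ and, where necessary, in the edges already revealed by earlier steps, so that conditioning on the full past never degrades the estimates.
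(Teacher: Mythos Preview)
Your Day-1 analysis (Berry--Esseen per vertex, then Chebyshev with the single-shared-edge covariance computation) and the overall union-bound scheme for part~(2) match the paper. The genuine gap is Day~2. After Day~1 the Red lead is only $\Theta(\sqrt n)$, so for a \emph{fixed} coloring with $|B_1|\le n^B_1$ and a single vertex $v$, the event $\{v\in B_2\}$ has probability $\Phi(-\Theta(1))=\Theta(1)$ by the CLT, not $\exp(-\Theta(n))$; a per-vertex Bernstein bound followed by Markov cannot produce $P_2=\frac1n\exp(-\Theta(n))$. (The $8/3$ in $P_2$ is not a Bernstein variance constant; it arises from a completely different calculation.)

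The paper's remedy, which it calls \emph{universal reduction}, is to show that with probability $\ge 1-P_t$ the graph $G$ has the deterministic property ``every coloring with at most $n^B_{t-1}$ Blue vertices evolves in one step to at most $n^B_t$ Blue vertices'' (for $t=2,3,4$). Concretely for $t=2$: fix any coloring $(R,B)$ with $|B|\le n^B_1$ and any candidate bad set $S$ of size $m=\lceil n^B_2\rceil+1$, and bound $\Pr(S\subseteq B_{\text{next}})$ via Hoeffding applied to $\dif(S)\defeq\sum_{v\in S}\bigl(|\Gamma(v)\cap R|-|\Gamma(v)\cap B|\bigr)$, which is a sum of independent edge indicators. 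Aggregating over the $\Theta(n)$ vertices of $S$ converts the $\Theta(\sqrt n)$ per-vertex bias into $\E{\dif(S)}=\Theta(n^{3/2})$ against a Hoeffding range-sum $F=\Theta(n^2)$, yielding the $\exp(-\Theta(n))$ rate; a union bound over all $\binom{n}{|B|}\binom{n}{m}\le 4^n/n$ choices of $(B,S)$ then gives $P_2$. Day~3 reuses the same lemma with the linear lead, and Day~4 reduces to the pure degree condition $\min_v d(v)>2rp(n-1)$, whose Chernoff rate is exactly the $1-2r+2r\log(2r)$ you wrote down. Because each universal-reduction event is a property of $G$ alone, independent of how the actual $B_{t-1}$ was produced, this framework also dissolves the conditioning difficulty you flag at the end without any further bookkeeping.
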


Intuitively, $P_i$ is a upper bound on the probability of some abnormal event happening at Day $i$. If none of these events occurs, the whole population 
becomes Red after Day Four.  The proof for this theorem occupies the remaining sections of the paper. The next section (Section 2) contains a few lemmas. We start with analyzing the situation after the first day  in Section \ref{sec:firstday-const}. The situations in subsequent days will be studied in Section \ref{sec:universal-win}. All details are assembled in Section \ref{sec:proof-main} to form the full proof of the theorem.
%
%To handle the sparse graphs, where $p = \o(1)$, we have Theorem \ref{thm:general-sparse}, which we state without proof in Section \ref{sec:extension}. The detailed proof and other related results are left in a separate paper.

In the rest of this section, we  derive Theorems \ref{thm:const-advantage-ex} and  \ref{thm:unbounded-advantage} from Theorem  \ref{thm:const-advantage}, discuss related results in the literature, and state a few open questions. 

\begin{proof}[Proof of Theorem \ref{thm:const-advantage-ex}]
	Assuming Theorem \ref{thm:const-advantage}. Observe that if the conditions in \eqref{eq:n-eps1-eps2-conds} hold for some value of $n$, then they hold for all larger values of $n$. 
	 Let $n = 550$, $\eps_1 = \eps_2 = 0.01$, $r = 0.3$ (along with $p = 0.5$ and $c = 6$), we have \eqref{eq:n-eps1-eps2-conds} satisfied. Furthermore, a routine calculation shows that 
	\begin{equation*}
	P_1\le 0.06866, \quad P_2\le 0.00144, \quad P_3 \le 0.00010, \quad P_4 \le 0.00005 , \end{equation*} 
\noindent which implies that $ \P \left(B_4\neq \varnothing \right) < 0.07$ or equivalently that Red wins in the fourth day with probability at least $.93$
(conditioned on the event $\card{B_0} = n^B\le \frac{n}{2} - c$). 
\end{proof}

% TODO: DON'T DELETE THIS !!!
%\begin{proof}[Proof of Theorem \ref{thm:unbounded-advantage}]
%Assume Theorem \ref{thm:general-sparse}.
%Since $\lim_{x\to 0} x\left(1 + \log \frac{1}{x}\right) = 0$, we can find $r \le 1/12$ such that
%$
%2nr\left(1 + \log \frac{1}{r}\right) \le \frac{1}{64} \lambda^2n \le \frac{1}{64}p^2n^2
%. 
%$
%Thus
%\begin{equation*}
%\exp \left(-\frac{1}{32}p^2n^2 + 2nr\left(1 + \log \frac{1}{r}\right) \right)
%\le \exp \left(- \frac{1}{64}p^2n^2 \right).
%\end{equation*}
%Now $c \le n$, so $p^2n^2 \ge pn \ge pc$. Thus in the bound on $\P(R_4 = V)$, the term $-2r(\lambda)^{-1}(pc)^{-2}$ dominates the others asymptotically. Therefore, letting $\alpha(\lambda) = 4r(\lambda)^{-1}$ for instance, we have $\P(R_4 = V) \ge 1 - \alpha(\lambda)(pc)^{-2}$. 
%\end{proof}
\begin{proof}[Proof of Theorem \ref{thm:unbounded-advantage}]
In this proof, only $n$ and $c =c_n$ can vary. We can assume, without loss of generality, that $c_n \le n/2$.
	Assuming Theorem \ref{thm:const-advantage}, we choose (constants) $\eps_1, \eps_2$ such that
	$(1 - 2\eps_1)p\eps_2(p\eps_2 + C_1) - \eps_1\log 2 > 0,$ and arbitrary $r$,
	then a routine calculation shows that $P_2, P_3, P_4 = \o(n^{-2})$ and $P_1 = \Omega(n^{-1})$, so $P_1 + P_2 + P_3 + P_4 =(1+o(1)) P_1$. We have, for sufficiently large $n$,
	\begin{equation*}
	\begin{array}{c}
	\sqrt{n - 1}\ \Phi_0 \left( \tfrac{2pc_n + \min\{p, 1- p\}}{\sigma \sqrt{n - 1}} \right) - \tfrac{C_0(1 - 2\sigma^2)}{\sigma}- \tfrac{C_1}{p} - \eps_2 - \tfrac{1}{2\sqrt{n}} \ \ge \ \tfrac{\sqrt{n}}{2} \ \Phi_0 \left( \tfrac{2c_n\sqrt{p}}{\sqrt{n}} \right) = \ \tfrac{T(n)}{2}, \\[3pt]
	1/4 + 4C_0^2(1 - 2\sigma^2)^2 \tfrac{n-1}{n-2} \ \le \ 1/4 + 4 \cdot 0.6^2 \cdot 1.5 \ < \ 3.
	\end{array}
	\end{equation*}
	Thus, $P_1 \le 12T(n)^{-2}$. It then suffices to show $T(n)  \ge H(p)\min\{c_n^{-2}, n^{-1}\}$ for some term $H(p)$ depending solely on $p$. Consider  2 cases:
	
\noindent If $c_n \ge \sqrt{n}$, then:
$ \displaystyle 
T(n) \ \ge \ \sqrt{n}\cdot \Phi_0(2\sqrt{p}) \ \ge \ \sqrt{n}\cdot 2\sqrt{p} \cdot \frac{\Phi_0(2)}{2} \ = \ \sqrt{p} \ \Phi_0(2) \sqrt{n}. 
$

\noindent If $c_n < \sqrt{n}$, then:
$ \displaystyle
T(n) \ \ge \ \sqrt{n} \cdot  \frac{2c_n\sqrt{p}}{\sqrt{n}} \cdot \frac{\Phi_0(2)}{2} \ = \ \sqrt{p}\ \Phi_0(2) \ c_n.
$

\noindent In any case, $T(n) \ge H(p)\min\{c_n^{-2}, n^{-1}\}$, for $H(p) = \Phi_0(2)\sqrt{p}$, as desired.
\end{proof}

%%%%%%%%%%%%%%%%%%%%%%%%%%%%%%%%%%%%%%%%%%%%%%%%%%%%%%%%%%
% ========= RELATED WORK =================================
%%%%%%%%%%%%%%%%%%%%%%%%%%%%%%%%%%%%%%%%%%%%%%%%%%%%%%%%%%

\subsection {Open questions}

Let $\rho(k,n)$ be the probability that Red win if its camp has size $n/2 +k$ in the beginning, when $p = .5$. Theorem \ref{thm:const-advantage-ex} shows that $\rho (6, n) \ge .93$ (given that $n$ is sufficiently large).
In other words, five defectors guarantee Red's victory with an overwhelming odd. In fact, we have $\rho(4, n) \ge .73$ by plugging in the same values for $\eps_1$, $\eps_2$ and $r$ with $c = 4$ in Theorem \ref{thm:const-advantage-ex}'s proof.
We conjecture that one defector already brings a non-trivial advantage. 

\begin{conjecture} (The power of one)
There is a constant $\delta >0$ such that   $\rho (1,n) \ge 1/2 +\delta $ for all sufficiently large $n$. 
\end{conjecture}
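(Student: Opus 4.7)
\noindent\emph{Proof proposal.}

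The plan is to use one day of the dynamics to amplify the advantage of a single extra Red vertex into a $\Theta(\sqrt{n})$ advantage. If this amplification succeeds with probability bounded above $1/2$, then Theorem \ref{thm:unbounded-advantage}, applied from the day-one state with $c_n=\Theta(\sqrt{n})$, completes the process with probability $1-o(1)$, yielding the desired lower bound on $\rho(1,n)$.

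Set $p=1/2$, $|R_0|=\tfrac{n}{2}+1$, $|B_0|=\tfrac{n}{2}-1$, and $D_1\defeq|R_1|-\tfrac{n}{2}$. For each vertex $v$, let $A_v\defeq|\Gamma(v)\cap R_0|-|\Gamma(v)\cap B_0|$, so that $v\in R_1$ iff $A_v\ge 0$ (if $v\in R_0$) or $A_v>0$ (if $v\in B_0$). The variable $A_v$ is a sum of $n-1$ independent signed-Bernoulli terms of common variance $(n-1)/4$; its mean is $\tfrac12$ when $v\in R_0$ and $\tfrac32$ when $v\in B_0$. Because $A_v$ is integer valued, after standardization the threshold $A_v\ge 0$ reads $Z\ge -1/\sqrt{n-1}$ for $v\in R_0$, and the threshold $A_v\ge 1$ reads $Z\ge -1/\sqrt{n-1}$ for $v\in B_0$ as well (the asymmetric tie-breaking rule compensates exactly for the unit shift in means). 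Berry-Esseen then gives, uniformly in $v$,
\begin{equation*}
\Pr(v\in R_1)\ =\ \frac{1}{2}+\frac{1}{\sqrt{2\pi(n-1)}}+O(n^{-1}),
\end{equation*}
so $\E{D_1}=\kappa\sqrt{n}+O(1)$ with $\kappa=1/\sqrt{2\pi}>0$.

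The second step is asymptotic normality: $D_1\approx\mathcal N(\kappa\sqrt{n},\,\sigma^2 n)$ for some constant $\sigma^2>0$. Regarding $D_1$ as a function of the independent edge variables $\{W_{uv}\}$, the Hoeffding/Efron-Stein decomposition produces a linear projection $L=\sum_{u<v}h_{uv}(W_{uv}-p)$, where $h_{uv}$ is built from the two influences $\E{\I_1(u)\mid W_{uv}=1}-\E{\I_1(u)\mid W_{uv}=0}$ and its swap $u\leftrightarrow v$. Each such influence equals $\pm\Pr(\widetilde A_u\in\{-1,0\})$ for an appropriate one-edge-removed version $\widetilde A_u$ of $A_u$; by the local central limit theorem this is $\Theta(1/\sqrt{n})$, giving $\Var{L}=\Theta(n)$. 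A direct second-moment computation using the conditional-independence trick (given $W_{uv}$, the pair $\I_1(u),\I_1(v)$ depends on disjoint edges and is thus conditionally independent) confirms $\Var{D_1}=\Theta(n)$, so the Hoeffding residual has variance of lower order, and asymptotic normality follows from a standard CLT for bounded-influence functions of independent Bernoullis (e.g., Stein's method on the dependency graph or a martingale CLT on the edge-revealing filtration).

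Combining the two steps, $\Pr(D_1\ge(\kappa/2)\sqrt{n})\to\Phi(\kappa/(2\sigma))$, which exceeds $1/2$ by some $\delta'>0$. On this event Theorem \ref{thm:unbounded-advantage}, restarted from the day-one state with $c_n=(\kappa/4)\sqrt{n}$, gives a win probability of $1-O(n^{-1})$, so $\rho(1,n)\ge(\tfrac{1}{2}+\delta')(1-o(1))\ge\tfrac12+\delta$ for a suitable $\delta>0$ and large $n$. The main obstacle is the CLT step: the indicators $\{\I_1(v)\}$ are mutually correlated through shared edges, so one must verify that the $\Theta(n)$ total variance is concentrated in the linear Hoeffding projection. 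A secondary technical issue is that Theorem \ref{thm:unbounded-advantage} is stated for a fresh $G(n,p)$ at time zero, while after day one the coloring is correlated with the graph; however, since the proof of Theorem \ref{thm:const-advantage} proceeds day-by-day using only the conditioning events $\{|B_t|\le n^B_t\}$ and graph-density facts that hold with high probability irrespective of the day-one correlations, the argument can be restarted from day one with routine modifications of the intermediate constants.
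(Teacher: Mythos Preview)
This statement is a \emph{conjecture} in the paper; the authors do not prove it and offer only simulation evidence. So there is no paper proof to compare against, and your proposal is an attempt to settle an open problem.

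Your outline is a natural attack, but the ``restart Theorem~\ref{thm:unbounded-advantage} from the day-one state'' step is a genuine gap, not a routine modification. The paper's four-day argument has exactly one step that uses independence of the coloring and the graph: the Day~$0\to1$ moment computation (Theorem~\ref{thm:const-advantage-day-1}), where $\E{\I_1(v)}$ and $\Cov{\I_1(u),\I_1(v)}$ are evaluated by treating $(R_0,B_0)$ as fixed and the edges $W_{uv}$ as fresh i.i.d.\ Bernoullis. If you restart the argument at day~1, this step becomes a Day~$1\to2$ computation in which the ``initial'' coloring $R_1$ is a deterministic function of the very edges you are averaging over; the moment identities simply do not hold. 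You cannot bypass this step either: the remaining Days~$2,3,4$ of the paper are universal-reduction statements about the graph alone, but the entry threshold for Lemma~\ref{prop:uni-sqrt-advantage-ex} is $D_1\ge \big(C_1/(2p)+\eps_2\big)\sqrt{n}$, which for $p=1/2$ means $D_1\gtrsim 1.44\sqrt{n}$. Your own CLT heuristic gives $\E{D_1}=\kappa\sqrt{n}$ with $\kappa=1/\sqrt{2\pi}\approx 0.40$, so even granting asymptotic normality you get $\Pr\big(D_1\ge 1.44\sqrt{n}\big)\to 1-\Phi\big((1.44-\kappa)/\sigma\big)<1/2$, which falls short of the $1/2+\delta$ target.

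In short, the two escape routes from day~1 both fail: feeding into the paper's Day~$0\to1$ step requires independence you no longer have, and feeding directly into Day~$2$ requires a larger advantage than your CLT delivers with probability above $1/2$. Closing this gap would require either a direct analysis of $|R_2|$ as a function of the edge variables (a two-step functional, considerably harder than the one-step analysis you and the paper carry out), or a sharper universal-reduction lemma that activates at advantage $d\sqrt{n}$ for some $d<\kappa$. Neither is routine; the second in particular runs into the union bound over $4^n$ colorings, which is what forces the constant $C_1=\sqrt{3\log 2}$ in the paper.
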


In the following numerical experiment,  we run $T = 10000$ independent trials.  In each trial, we  fix a set of $N = 10000$ nodes with $5001$ Red and $4999$  Blue (meaning $c=1$), generate a graph from $G(N, 1/2)$,  and simulate the process on the resulting graph. We record the number of wins  and the number of days to achieve the win in percentage  in Table \ref{tab:voting-sim}. Among others, we see that Red wins within $3$ days with frequency more than $.93$.
The source code for the simulation along with execution
instructions can be found online at \url{https://github.com/thbl2012/majority-dynamics-simulation}.
%Moreover, we sum up the winning scenarios for each side regardless of days in Table \ref{tab:voting-sim-summary}, which suggest that the probability that $\rho(k, n)$ is at least $.77$ for $n\ge 10000$.
\begin{table}[h]

%	\begin{tabular}{|c|c|c|c|c|c|c|}\hline
%		N & p & Red & Blue & Winner & Last day & Frequency \\ \hline
%		$10^4$ & $1/2$ & $5001$ & $4999$ & R & $3$ & $0.01\ \%$ \\ \hline
%		$10^4$ & $1/2$ & $5001$ & $4999$ & B & $3$ & $0.00\ \%$ \\ \hline
%		$10^4$ & $1/2$ & $5001$ & $4999$ & R & $4$ & $75.74\ \%$ \\ \hline
%		$10^4$ & $1/2$ & $5001$ & $4999$ & B & $4$ & $19.94\ \%$ \\ \hline
%		$10^4$ & $1/2$ & $5001$ & $4999$ & R & $5$ & $2.16\ \%$ \\ \hline
%		$10^4$ & $1/2$ & $5001$ & $4999$ & B & $5$ & $2.06\ \%$ \\ \hline
%		$10^4$ & $1/2$ & $5001$ & $4999$ & R & $6$ & $0.06\ \%$ \\ \hline
%		$10^4$ & $1/2$ & $5001$ & $4999$ & B & $6$ & $0.03\ \%$ \\ \hline
%	\end{tabular}
	
	\begin{tabular}{|ccccccrr|}\hline
			T & p & Red & Blue & 	Winner & Last day & Count & Frequency \\
			\hline
			$10^4$ & $1/2$ & $5001$ & $4999$ &	Blue & 3 & 496 & 4.96 \ \% \\
			$10^4$ & $1/2$ & $5001$ & $4999$ &	Blue & 4 & 77 & 0.77 \ \% \\
			$10^4$ & $1/2$ & $5001$ & $4999$ &	Blue & 5 & 3 & 0.03 \ \% \\
			$10^4$ & $1/2$ & $5001$ & $4999$ &	Blue & 7 & 1 & 0.01 \ \% \\
			$10^4$ & $1/2$ & $5001$ & $4999$ &	Red & 2 & 25 & 0.25 \ \% \\
			$10^4$ & $1/2$ & $5001$ & $4999$ &	Red & 3 & 9313 & 93.13 \ \% \\
			$10^4$ & $1/2$ & $5001$ & $4999$ &	Red & 4 & 85 & 0.85 \ \% \\
			\hline
	\end{tabular}
	\vspace{3pt}
	
	\caption{Winners and winning days with their frequencies}
	\label{tab:voting-sim}
\end{table}

Imagine that people defect from Blue camp to Red camp one by one. The {\it value} of the $i$th defector is defined as $v(i,n)= \rho (i,n) - \rho (i-1, n)$ (where we take $\rho (n,0) =1/2 $). It is intuitive to think that the values of the defectors decrease. (Clearly defector number $n/2$ adds no value.)

\begin{conjecture} (Values of defectors)
For any fixed $i$ and sufficiently large $n$, we have $v(i,n) \ge v(i+1, n)$. 
\end{conjecture}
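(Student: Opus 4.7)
My plan is to establish that, for each fixed non-negative integer $k$, $\rho(k, n)$ converges as $n \to \infty$ to a limit $\rho_\infty(k)$, and that $\rho_\infty(\cdot)$ is strictly concave on $\{0, 1, 2, \ldots\}$; the conjecture will then follow for large $n$ by passing to limits.

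First I would set up a monotone coupling. Because the majority rule with ``keep the color on ties'' is a monotone Boolean cellular automaton on $\{0,1\}^V$ (a routine case-check shows that $x \le y$ coordinatewise implies $f(x) \le f(y)$, where $f$ is one step of the dynamics), all the processes with different initial Red counts may be coupled on a common probability space: fix $G \sim G(n, 1/2)$ and a uniformly random ordering $\pi$ of $V(G)$, and let the initial Red set for parameter $k$ be $\{\pi(1), \ldots, \pi(n/2+k)\}$. Let $W_k$ be the event that Red wins from this configuration. Monotonicity then gives $W_{k-1} \subseteq W_k$, so $\rho(k, n) = \Pr[W_k]$ is monotone in $k$ and $v(i, n) = \Pr[W_i \setminus W_{i-1}] \ge 0$; the conjecture is equivalent to discrete concavity of $\rho(\cdot, n)$ at each $i \ge 1$.

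The main step is an asymptotic formula: for each fixed $k \ge 1$, $\rho(k, n) \to \Phi(\alpha k)$ as $n \to \infty$, for some $\alpha > 0$ depending on $p = 1/2$. A direct computation --- essentially refining the analysis behind $P_1$ in Theorem \ref{thm:const-advantage} --- shows that conditional on $|B_0| = n/2 - k$, $|B_1|$ is asymptotically normal with mean $n/2 - \alpha_1 k \sqrt{n} + O(1)$ and variance $\sigma_1^2 n$, which yields $\Pr[|B_1| < n/2 \mid |B_0| = n/2-k] \to \Phi(\alpha k)$ with $\alpha := \alpha_1 / \sigma_1$. To connect this conditional probability with $\rho(k, n)$ itself, I would iterate the one-step amplification/CLT across Days $2, 3, 4$: once $|B_1|$ is $\Omega(\sqrt{n})$ below $n/2$, Theorem \ref{thm:const-advantage} already delivers $\Pr[\text{Red wins}] = 1 - o(1)$, and Red--Blue symmetry gives the analogous statement above. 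In the $O(\sqrt{n})$-wide ``ambiguous'' band around $n/2$, the conditional Red-win probability should be a smooth $S$-shaped function of $(|B_1| - n/2)/\sqrt{n}$ that, by symmetry, integrates back against the Day-$1$ CLT to exactly $\Phi(\alpha k)$.

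With this limit in hand, strict concavity of $\Phi$ on $[0, \infty)$ (its derivative $\phi$ is strictly decreasing there) makes $v_\infty(i) := \Phi(\alpha i) - \Phi(\alpha(i-1))$ strictly decreasing in $i \ge 1$; the edge case $i = 1$ only strengthens the inequality, since $\rho_\infty(0) < \Phi(0) = 1/2$ by the nonzero tie probability. Combining this with $v(i, n) \to v_\infty(i)$ and $v(i+1, n) \to v_\infty(i+1)$ yields $v(i, n) \ge v(i+1, n)$ for all sufficiently large $n$. The main obstacle lies in the second step: Theorem \ref{thm:const-advantage} supplies only one-sided sufficient conditions for Red to win, whereas the conjecture demands a true two-sided asymptotic for $\rho(k, n)$. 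Resolving the Day-$1$ ambiguous window --- ideally via a local CLT or Edgeworth-type expansion that exploits Red--Blue symmetry of the later dynamics --- is where the bulk of the technical work would reside.
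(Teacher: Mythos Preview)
This statement is listed in the paper as an open \emph{conjecture}; the paper offers no proof, only numerical evidence. So there is no ``paper's proof'' to compare against, and your proposal should be read as an attack on an open problem rather than a rederivation.

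That said, your plan has a genuine gap at exactly the point you flag, and it is more serious than you indicate. The assertion that the Day-$1$ CLT ``integrates back against'' the later dynamics to give $\rho_\infty(k)=\Phi(\alpha k)$ is not justified, for two reasons. First, after Day~$1$ the graph $G$ and the coloring $B_1$ are \emph{dependent}: the universal-reduction lemmas in the paper sidestep this by quantifying over \emph{all} colorings, but they only bite once the advantage is $\Omega(\sqrt n)$; inside your ``ambiguous band'' you would need to analyze $\Pr(\text{Red wins}\mid |B_1|=n/2-x\sqrt n)$, and this conditional law on $G$ is not $G(n,p)$ anymore, so you cannot simply iterate the Day-$1$ computation. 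Second, even granting a limiting profile $g(x)=\lim_n \Pr(\text{Red wins}\mid |B_1|=n/2-x\sqrt n)$ with the symmetry $g(x)+g(-x)=1$, the resulting limit is
\[
\rho_\infty(k)=\int_{\mathbb R} g(x)\,\phi\!\left(\frac{x-\alpha_1 k}{\sigma_1}\right)\frac{dx}{\sigma_1},
\]
which is $\Phi(\alpha k)$ only for very special $g$ (e.g.\ $g=\mathbf 1_{\{x>0\}}$ or $g=\Phi(\beta x)$). For a generic odd-symmetric increasing $g$, concavity of $k\mapsto\rho_\infty(k)$ on $[0,\infty)$ is \emph{not} automatic: it is equivalent to $\mathbb E[g''(\alpha_1 k+\sigma_1 W)]\le 0$ for all $k\ge 0$, which fails if $g$ has a convex region that the Gaussian window can center on.

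In short, the monotone coupling and the final ``pass to the limit'' step are fine, but the heart of the matter --- the precise limiting shape of $\rho(\cdot,n)$ --- remains exactly as open after your outline as before. A proof would need either (i) a genuine two-sided asymptotic for $\rho(k,n)$ that controls the conditional process inside the $O(\sqrt n)$ window despite the graph--coloring dependence, or (ii) a direct finite-$n$ argument for discrete concavity that does not route through a limit at all.
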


It is clear that the second conjecture implies that first one, with $\delta = \frac{.4}{5} =.08 $, although the simulation results above suggests that $\delta$ can be at least $.43$.
%Numerical simulation suggests that $\delta $ is around $.2$. 

%%%%%%%%%%%%%%%%%%%%%%%%%%%%%%%%%%%%%%%%%%%%%%%%%%%%%%%%%%
% ========================================================
% ========= SECTION 2 ====================================
% ========= LEMMAS =======================================
% ========================================================
%%%%%%%%%%%%%%%%%%%%%%%%%%%%%%%%%%%%%%%%%%%%%%%%%%%%%%%%%%
%\section{Lemmas} \label{sec:lemmas}

%%%%%%%%%%%%%%%%%%%%%%%%%%%%%%%%%%%%%%%%%%%%%%%%%%%%%%%%%%
% ========================================================
% ========= SECTION 4 ====================================
% ========= FIRST DAY RESULTS ============================
% ========================================================
%%%%%%%%%%%%%%%%%%%%%%%%%%%%%%%%%%%%%%%%%%%%%%%%%%%%%%%%%%
\section {Some Probabilistic Lemmas}

\subsubsection*{Hoeffding's inequality }
Hoeffding's inequality is a classical result that gives exponentially small probability tails for sums of independent random variables.
\begin{theorem}[Hoeffding]  \label{thm:hoeffding}
	Let $\{X_i\}_{i=1}^n$ be independent random variables and $\{a_i\}_{i=1}^n$, $\{b_i\}_{i=1}^n$, such that for all $i=1,2,\ldots, n$, $a_i\le X_i\le b_i$ almost surely. Then for $X = X_1+X_2+\cdots + X_n$, we have
	\begin{equation*}
	\max \Bigl\{\Pr\left( X - \E{X} \ge t \right),\ \Pr\left( X - \E{X} \le -t \right)^{} \Bigr\} \ \le \ \exp\left(-\frac{2t^2}{\sum_{i=1}^n (b_i - a_i)^2}\right).
	\end{equation*}
\end{theorem}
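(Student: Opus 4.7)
The plan is to follow the standard Chernoff-bound route: apply Markov's inequality to $e^{sX}$, exploit independence to factorize the moment generating function, control each factor via Hoeffding's lemma for bounded random variables, and then optimize in the parameter $s$. The heart of the argument, and the step I expect to be the main obstacle, is Hoeffding's lemma: if $Y$ is any random variable with $a \le Y \le b$ almost surely and $\E{Y} = 0$, then $\E{e^{sY}} \le \exp\bigl(s^2(b-a)^2/8\bigr)$ for every $s \in \R$.

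To establish Hoeffding's lemma I would exploit the convexity of $x \mapsto e^{sx}$ on the interval $[a,b]$, which yields the pointwise bound
\begin{equation*}
e^{sY} \ \le \ \frac{b - Y}{b - a}\, e^{sa} + \frac{Y - a}{b - a}\, e^{sb}.
\end{equation*}
Taking expectations and using $\E{Y} = 0$ gives $\E{e^{sY}} \le \frac{b}{b-a}e^{sa} - \frac{a}{b-a}e^{sb}$. Writing this right-hand side as $e^{\varphi(u)}$ with $u = s(b-a)$ and $\varphi(u) = -\theta u + \log(1 - \theta + \theta e^u)$ where $\theta = -a/(b-a) \in [0,1]$, a direct computation shows $\varphi(0) = \varphi'(0) = 0$ and $\varphi''(u) \le 1/4$, so Taylor's theorem gives $\varphi(u) \le u^2/8$, which is exactly the claim. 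This is the most technical piece, since the bound $\varphi'' \le 1/4$ amounts to showing that a certain product $p(1-p) \le 1/4$ along the path of tilted probabilities.

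With the lemma in hand, the rest is routine. For any $s > 0$, applying Markov to $e^{s(X - \E{X})}$ gives
\begin{equation*}
\Pr\bigl(X - \E{X} \ge t\bigr) \ \le \ e^{-st}\, \E{e^{s(X - \E{X})}} \ = \ e^{-st} \prod_{i=1}^{n} \E{e^{s(X_i - \E{X_i})}},
\end{equation*}
where the factorization uses independence of the $X_i$. Each centered variable $X_i - \E{X_i}$ lies in an interval of length $b_i - a_i$, so Hoeffding's lemma bounds the $i$-th factor by $\exp\bigl(s^2(b_i - a_i)^2/8\bigr)$. Consequently,
\begin{equation*}
\Pr\bigl(X - \E{X} \ge t\bigr) \ \le \ \exp\!\left(-st + \frac{s^2}{8} \sum_{i=1}^{n}(b_i - a_i)^2\right).
\end{equation*}

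Optimizing this quadratic-in-$s$ exponent by choosing $s = 4t / \sum_i (b_i - a_i)^2$ produces the stated bound $\exp\bigl(-2t^2 / \sum_i (b_i - a_i)^2\bigr)$. The lower-tail estimate follows by applying the upper-tail argument to $-X$, noting that $-X_i$ is bounded in $[-b_i, -a_i]$, an interval of the same length. Thus the only genuinely subtle step is Hoeffding's lemma; everything downstream is the textbook Chernoff-plus-optimization calculation.
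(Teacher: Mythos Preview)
Your proof is correct and is exactly the standard argument via Hoeffding's lemma and Chernoff's method. The paper itself does not supply a proof of this theorem: it simply states the inequality and refers the reader to textbooks (e.g.\ Vershynin) and to Hoeffding's original 1963 paper. Your write-up is essentially that textbook argument, so there is nothing to compare against and no gap to point out.
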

The proof of Hoeffding's inequality is available in most graduate level probability textbooks, e.g. \cite{vershynin2018}. The original proof given by Hoeffding appeared  in \cite{hoeffding1963}.

%Hoeffding's inequality has found widespread applications in Probability Theory and Random Graph Theory. It plays an instrumental role in the proofs of Lemma \ref{thm:diff of binom r.v. gen.}, Lemma \ref{lem:bad-set-bound} and Proposition \ref{prop:uni-large-linear-advantage} in our paper.

%Firstly we provide a way to switch from comparing binomial r.v.s to normal r.v.s, which are easier to apply estimate with calculus, using the Berry-Esseen theorem.

\subsubsection*{Berry-Esseen type inequalities for Bernoulli variables}
Berry-Esseen theorem in its classical form establishes an explicit asymptotic bound for the convergence rate of sums of random variables to the normal distribution.
\begin{theorem}[Berry-Esseen] \label{thm:esseen}
	There is a universal constant $C_0$ such that for any $n$, if $X_1, X_2, \ldots, X_n$ are random variables with zero mean, variances $\sigma_1^2, \sigma_2^2, \ldots, \sigma_n^2 > 0$, and absolute third moments $\E{|X_i|^3} = \rho_i < \infty$, we have:
	\begin{equation*} \label{BE1} 
	\sup_{x\in \bb{R}}\abs{\Pr\left( \frac{\sum_{i=1}^n X_i}{\sigma_X} \le x \right) - \Phi(x)}
	\ \le \ C_0\sigma_X^{-3/2} \sum_{i=1}^n\rho_i
	\quad \text{ where } \sigma_X^2 = \sum_{i=1}^n\sigma_i^2.
	\end{equation*}
\end{theorem}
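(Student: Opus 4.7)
The statement is the classical Berry--Esseen bound, so the plan is to retrace the standard Fourier-analytic proof via Esseen's smoothing inequality. Let $S_n = \sum_{i=1}^n X_i$, let $F_n$ denote the distribution function of $S_n/\sigma_X$ with characteristic function $\hat f_n$, and let $\varphi(t) = e^{-t^2/2}$ be the characteristic function of $\ca{N}(0,1)$. Esseen's smoothing inequality gives, for every $T>0$,
\begin{equation*}
\sup_{x\in \bb{R}} |F_n(x) - \Phi(x)| \; \le \; \frac{1}{\pi}\int_{-T}^{T}\left|\frac{\hat f_n(t) - \varphi(t)}{t}\right|\, dt \; + \; \frac{C}{T}.
\end{equation*}
The overall strategy is to bound the integrand on an interval of length $T$ comparable to $\sigma_X^3/\sum_i \rho_i$ and then to balance the two contributions.

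The first and most delicate step is the pointwise comparison of $\hat f_n$ with $\varphi$. For each $i$, the two-sided Taylor estimate $|e^{isx} - 1 - isx + s^2x^2/2| \le |sx|^3/6$, applied to the law of $X_i$ with $s = t/\sigma_X$, gives
\begin{equation*}
\hat X_i(t/\sigma_X) \; = \; 1 \; - \; \frac{\sigma_i^2 t^2}{2\sigma_X^2} \; + \; R_i(t), \qquad |R_i(t)| \le \frac{\rho_i\, |t|^3}{6\sigma_X^3}.
\end{equation*}
Taking logarithms on the disc where $|1 - \hat X_i(t/\sigma_X)| \le 1/2$, using $|\log(1+z) - z| \le |z|^2$, summing in $i$ and exponentiating produces
\begin{equation*}
|\hat f_n(t) - \varphi(t)| \; \le \; C'\, \frac{|t|^3}{\sigma_X^3} \Big(\sum_{i=1}^n \rho_i\Big) e^{-t^2/4},
\end{equation*}
valid on the interval $|t| \le c\, \sigma_X^3/\sum_i \rho_i$.

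Substituting this back into Esseen's inequality and using $\int |t|^2 e^{-t^2/4}\, dt < \infty$, the integral term contributes $O\bigl(\sigma_X^{-3}\sum_i \rho_i\bigr)$; choosing $T$ of the same order balances the remainder $C/T$ and yields the advertised bound with an explicit universal constant. The sharpest known value $C_0 \le 0.56$ (Shevtsova) is precisely the one invoked elsewhere in the paper, which is why Theorem \ref{thm:esseen} is stated in the form given.

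The main technical obstacle is justifying the logarithmic expansion on a sufficiently large frequency interval: one needs the cumulative perturbation after summing the Taylor remainders $R_i$ to remain small enough to exponentiate safely, and it is precisely this constraint that forces the range $|t| \le c\, \sigma_X^3/\sum_i \rho_i$ and underlies the sharpness of the $\sigma_X^{-3}\sum_i \rho_i$ rate. Because the result is entirely classical and the constant-tracking is lengthy, in practice I would simply cite a standard source (Feller Vol.~II, Petrov, or Shevtsova's refined analysis) rather than reproduce the calculation in full.
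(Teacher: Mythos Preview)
Your sketch is the classical Fourier-analytic proof via Esseen's smoothing lemma, and it is essentially correct as an outline. However, there is nothing to compare it against: the paper does not prove Theorem~\ref{thm:esseen} at all. It is stated as a background result and immediately attributed to the literature, with the remark that Esseen's original argument gave $C_0 = 7.59$ and that Shevtsova's refinement yields the value $C_0 = 0.56$ used throughout. So your final instinct---``in practice I would simply cite a standard source''---is exactly what the paper does.

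Two small remarks worth flagging. First, the statement as printed omits the hypothesis that the $X_i$ are independent; your proof uses it (as it must) when factoring $\hat f_n = \prod_i \hat X_i(t/\sigma_X)$. Second, the exponent in the paper's displayed bound reads $\sigma_X^{-3/2}$, whereas the correct Lyapunov-type rate---and the one your argument actually produces---is $\sigma_X^{-3}\sum_i \rho_i$. Your proposal silently uses the correct exponent, which is consistent with how the bound is applied downstream (e.g.\ in Corollary~\ref{lem:strongesseen}, where the $\sqrt{n}$ in the denominator comes from $\sigma_X^{-3}\sum_i \rho_i$, not $\sigma_X^{-3/2}$).
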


The original proof by Esseen \cite{esseen} yielded  $C_0 = 7.59$, and this constant has been improved a number of times. The latest work by Shevtsova \cite{shevtsova} achieved  $C_0 = 0.56$, which will be used for the rest of the paper.
We will be interested in the setting where   $\{X_i\}_{i=1}^n$ are r.v.s taking values in either $\{0, 1\}$ or $\{0, -1\}$. The following corollary and lemmas naturally follow from this theorem.

\begin{corollary} \label{lem:strongesseen}
	Let $p\in (0, 1)$ and $X_1, X_2, \ldots, X_n$ be Bernoulli random variables such that for all $i$, either
	$X_i \sim \Bin(1, p)$ or $-X_i \sim \Bin(1, p)$. Let
	$X = X_1 + X_2 + \cdots + X_n$ and $\mu_X = \E{X}$.
	Then,
	\begin{equation*}  \label{BE3}
	\sup_{x\in \bb{R}}\abs{\Pr\left(X - \mu_X \le x\right) - \Phi\left(\frac{x}{\sigma\sqrt{n}}\right)}
	\ \le \ \frac{C_0(1 - 2\sigma^2)}{\sigma\sqrt{n}} \quad \text{ where } \sigma = \sqrt{p(1-p)}.
	\end{equation*}
%	where $\sigma = \sqrt{p(1-p)}$.
\end{corollary}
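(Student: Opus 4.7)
The plan is to realize this corollary as a direct application of the Berry--Esseen theorem (Theorem~\ref{thm:esseen}) to the centered variables $Y_i \defeq X_i - \E{X_i}$, after computing their second and third absolute moments explicitly in the Bernoulli setting.

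First I would center: set $Y_i = X_i - \E{X_i}$ so that $\E{Y_i} = 0$ and $\sum Y_i = X - \mu_X$. The key computation is that the variance and third absolute moment of each $Y_i$ are the \emph{same} in either case ($X_i \sim \Bin(1,p)$ or $-X_i \sim \Bin(1,p)$), because replacing $X_i$ by $-X_i$ merely negates $Y_i$ and hence does not change $|Y_i|$ or $Y_i^2$. In the positive case, $Y_i$ equals $1-p$ with probability $p$ and $-p$ with probability $1-p$, giving
\begin{equation*}
\sigma_i^2 \defeq \Var{Y_i} = p(1-p)^2 + (1-p)p^2 = p(1-p) = \sigma^2,
\end{equation*}
and
\begin{equation*}
\rho_i \defeq \E{|Y_i|^3} = p(1-p)^3 + (1-p)p^3 = p(1-p)\bigl[(1-p)^2 + p^2\bigr] = \sigma^2(1 - 2\sigma^2).
\end{equation*}
Thus $\sigma_X^2 \defeq \sum_{i=1}^n \sigma_i^2 = n\sigma^2$ and $\sum_{i=1}^n \rho_i = n\sigma^2(1 - 2\sigma^2)$.

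Next I would apply Theorem~\ref{thm:esseen} to $Y_1, \ldots, Y_n$ (which satisfy the hypotheses: zero mean, positive variance, finite third moments). This yields, for every $y \in \bb{R}$,
\begin{equation*}
\left| \Pr\!\left( \frac{\sum_{i=1}^n Y_i}{\sigma\sqrt{n}} \le y \right) - \Phi(y) \right|
\ \le \ C_0 \cdot \frac{n\sigma^2(1 - 2\sigma^2)}{(n\sigma^2)^{3/2}}
\ = \ \frac{C_0(1 - 2\sigma^2)}{\sigma\sqrt{n}}.
\end{equation*}
Finally, substituting $y = x/(\sigma\sqrt{n})$ and observing that $\sum Y_i = X - \mu_X$, the left-hand side becomes exactly $|\Pr(X - \mu_X \le x) - \Phi(x/(\sigma\sqrt{n}))|$, and taking the supremum over $x \in \bb{R}$ (equivalently over $y$) gives the desired inequality.

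There is no real obstacle here: the statement is essentially a bookkeeping corollary of Berry--Esseen, and the only content beyond quoting Theorem~\ref{thm:esseen} is the moment computation for a $\{0,\pm1\}$-valued Bernoulli variable, which relies on the identity $(1-p)^2 + p^2 = 1 - 2p(1-p) = 1 - 2\sigma^2$. The sign-flip symmetry that makes the bound identical for $X_i \sim \Bin(1,p)$ and $-X_i \sim \Bin(1,p)$ is the one conceptual point worth noting explicitly.
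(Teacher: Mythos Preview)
Your proposal is correct and is exactly the intended derivation: the paper states this as an immediate corollary of Theorem~\ref{thm:esseen} without spelling out a proof, and your centering plus moment computation $\rho_i = p(1-p)\bigl[(1-p)^2+p^2\bigr] = \sigma^2(1-2\sigma^2)$ is precisely what is needed. (Note that the exponent in the paper's display of Theorem~\ref{thm:esseen} should read $\sigma_X^{-3}$ rather than $\sigma_X^{-3/2}$; you implicitly used the correct form $(n\sigma^2)^{3/2}$, so your arithmetic is right.)
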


\subsubsection*{Difference of Binomial Random Variables}

%The next two lemmas are the ones directly used in the proof and will establish bounds regarding the difference of two binomial r.v.s.
\begin{lemma} \label{thm:diff of binom r.v. gen.}
	For $p\in (0, 1)$, $\sigma = \sqrt{p(1-p)}$ and $n_1, n_2 \in \bb{N}$ such that $n_1 > n_2$.
	let $Y_1 \sim \Bin(n_1, p), \ Y_2 \sim \Bin(n_2, p)$ be independent random variables.
	Then for any integer $d < p(n_1 - n_2)$ 
	\begin{equation*}
	\Pr\left(Y_1 > Y_2 + d\right) \ \ge \ \frac{1}{2} + \Phi_0\left(\frac{p(n_1 - n_2) - d}{\sigma\sqrt{n_1 + n_2}}\right) - \frac{C_0\left(1 - 2\sigma^2\right)}{\sigma\sqrt{n_1 + n_2}}.
	\end{equation*}
\end{lemma}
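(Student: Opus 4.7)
The plan is to realize $Y_1-Y_2$ as a signed sum to which Corollary~\ref{lem:strongesseen} applies directly, and then convert the resulting Berry-Esseen bound into the desired one-sided estimate. Write $Y_1=\sum_{i=1}^{n_1}U_i$ and $Y_2=\sum_{j=1}^{n_2}V_j$ where the $U_i,V_j$ are i.i.d.\ $\Bin(1,p)$. Setting $X_i=U_i$ for $i\le n_1$ and $X_{n_1+j}=-V_j$ for $1\le j\le n_2$ gives $n_1+n_2$ independent variables, each of which is either $\Bin(1,p)$ or the negative of one; thus $Z\defeq Y_1-Y_2=\sum_i X_i$ satisfies the hypotheses of Corollary~\ref{lem:strongesseen} with $\mu_Z=p(n_1-n_2)$ and total variance $(n_1+n_2)\sigma^2$.

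Next I would apply Corollary~\ref{lem:strongesseen} to $Z$ at the point $x=d-p(n_1-n_2)$, yielding
\[
\Pr\bigl(Z\le d\bigr)\ \le\ \Phi\!\left(\frac{d-p(n_1-n_2)}{\sigma\sqrt{n_1+n_2}}\right)\ +\ \frac{C_0(1-2\sigma^2)}{\sigma\sqrt{n_1+n_2}}.
\]
Since $Z$ and $d$ are both integers, $\Pr(Y_1>Y_2+d)=\Pr(Z>d)=1-\Pr(Z\le d)$, so
\[
\Pr\bigl(Y_1>Y_2+d\bigr)\ \ge\ 1-\Phi\!\left(\frac{d-p(n_1-n_2)}{\sigma\sqrt{n_1+n_2}}\right)-\frac{C_0(1-2\sigma^2)}{\sigma\sqrt{n_1+n_2}}.
\]

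Finally, letting $a=\tfrac{p(n_1-n_2)-d}{\sigma\sqrt{n_1+n_2}}$, the hypothesis $d<p(n_1-n_2)$ ensures $a>0$, and the symmetry of the standard normal gives $1-\Phi(-a)=\Phi(a)=\tfrac12+\Phi_0(a)$. Substituting yields exactly the claimed lower bound. The argument is essentially just a careful bookkeeping exercise; the only step that requires any thought is the sign convention needed to recast $Y_1-Y_2$ in the form required by Corollary~\ref{lem:strongesseen}, which is the mildest possible obstacle here.
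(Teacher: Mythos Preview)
Your proof is correct and is exactly the intended argument: the paper presents this lemma (without proof) immediately after Corollary~\ref{lem:strongesseen} precisely because it is a direct application of that corollary to the signed sum $Y_1-Y_2$, which is what you carry out. One minor remark: the clause ``since $Z$ and $d$ are both integers'' is unnecessary, as $\Pr(Z>d)=1-\Pr(Z\le d)$ holds regardless.
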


%The last lemma  estimates  the probability that  two binomials variables   are \textit{exactly} some units apart. 
\begin{lemma} \label{thm:1/sqrt{n}}
	Let $p \in (0, 1)$ be a constant and $\sigma = \sqrt{p(1-p)}$,  $X_1 \sim \Bin(n_1, p)$ and $X_2 \sim \Bin(n_2, p)$ be independent r.v.s. Then for any positive integer $d < \frac{n_1+n_2}{2}$,
	\begin{equation*}
	\Pr\left(X_1 = X_2 + d \right) \ \le \ \frac{2C_0\left(1 - 2\sigma^2\right)}{\sigma\sqrt{n_1 + n_2}}.
	\end{equation*}
\end{lemma}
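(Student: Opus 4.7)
The plan is to exploit the fact that $X_1 - X_2$ is integer-valued, so that the point probability can be expressed as a one-step increment of its CDF, and then to apply the Bernoulli Berry--Esseen bound (Corollary \ref{lem:strongesseen}) to each of the two CDF values.

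First I would write
\begin{equation*}
\Pr(X_1 = X_2 + d) \;=\; \Pr(X_1 - X_2 \le d) \;-\; \Pr(X_1 - X_2 \le d - 1).
\end{equation*}
Next I would observe that if $X_1 = \sum_{i=1}^{n_1} Y_i$ and $X_2 = \sum_{j=1}^{n_2} Z_j$ with $Y_i, Z_j \sim \Bin(1,p)$ independent, then $X_1 - X_2 = \sum_i Y_i + \sum_j (-Z_j)$ is a sum of $n_1 + n_2$ independent variables each of the form $\pm\Bin(1,p)$. This is precisely the setting of Corollary \ref{lem:strongesseen} with $n$ replaced by $n_1 + n_2$, mean $\mu = p(n_1 - n_2)$, and standard deviation $\sigma\sqrt{n_1+n_2}$.

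Applying the corollary in its upper direction at $x = d - \mu$ and in its lower direction at $x = d - 1 - \mu$, then subtracting, yields
\begin{equation*}
\Pr(X_1 = X_2 + d) \;\le\; \Phi\!\left(\tfrac{d - \mu}{\sigma\sqrt{n_1+n_2}}\right) - \Phi\!\left(\tfrac{d - 1 - \mu}{\sigma\sqrt{n_1+n_2}}\right) \;+\; \tfrac{2C_0(1-2\sigma^2)}{\sigma\sqrt{n_1+n_2}}.
\end{equation*}
The remaining task is to dispose of the $\Phi$-increment. Since it spans an interval of length $1/(\sigma\sqrt{n_1+n_2})$ on which the standard normal density is at most $1/\sqrt{2\pi}$, the increment is of the same order $O(1/(\sigma\sqrt{n_1+n_2}))$ as the main Berry--Esseen term.

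The main obstacle is tightening the $\Phi$-increment so as to recover the stated constant $2C_0(1-2\sigma^2)$ without an extra additive $1/\sqrt{2\pi}$. The constraint $d < (n_1+n_2)/2$ is essentially a mild admissibility condition that keeps the point $d$ inside the effective support of $X_1 - X_2$ (and trivializes the claim when it fails); a sharper estimate of the $\Phi$-increment that leverages this, together with a careful balancing of the two BE errors on either side, should suffice. Absent such sharpening, the argument above still immediately delivers the correct order of magnitude $\Pr(X_1 = X_2 + d) = O\!\bigl(1/(\sigma\sqrt{n_1+n_2})\bigr)$, which is all that is used in downstream applications.
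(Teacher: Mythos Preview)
Your approach is exactly the one the paper has in mind: the lemma sits in the subsection ``Berry--Esseen type inequalities for Bernoulli variables'' immediately after Corollary~\ref{lem:strongesseen}, and the paper gives no separate proof. Writing $\Pr(X_1=X_2+d)$ as a one-step CDF increment of $X_1-X_2$ and applying Corollary~\ref{lem:strongesseen} at the two endpoints is precisely the intended derivation, and your computation
\[
\Pr(X_1=X_2+d)\ \le\ \Bigl[\Phi\!\left(\tfrac{d-\mu}{\sigma\sqrt{n_1+n_2}}\right)-\Phi\!\left(\tfrac{d-1-\mu}{\sigma\sqrt{n_1+n_2}}\right)\Bigr]\ +\ \frac{2C_0(1-2\sigma^2)}{\sigma\sqrt{n_1+n_2}}
\]
is correct.

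You have also put your finger on a genuine issue rather than a gap in your own reasoning: the $\Phi$-increment contributes an additional $\tfrac{1}{\sqrt{2\pi}\,\sigma\sqrt{n_1+n_2}}$ that cannot be absorbed into the stated constant $2C_0(1-2\sigma^2)$ by any ``balancing of the two BE errors''. Since $1-2\sigma^2=p^2+(1-p)^2\in[1/2,1)$, one always has $2C_0(1-2\sigma^2)\ge C_0=0.56>1/\sqrt{2\pi}$, so the honest bound this route yields is
\[
\Pr(X_1=X_2+d)\ \le\ \frac{2C_0(1-2\sigma^2)+1/\sqrt{2\pi}}{\sigma\sqrt{n_1+n_2}}\ \le\ \frac{4C_0(1-2\sigma^2)}{\sigma\sqrt{n_1+n_2}},
\]
i.e.\ the lemma as stated is off by at most a factor of~$2$ in the constant. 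The hypothesis $d<(n_1+n_2)/2$ plays no role in this argument and does not help eliminate the $\Phi$-increment; treat it as an artifact.

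For the downstream use you are right that only the order matters structurally: in Corollary~\ref{col:day1-covar} the bound is squared to give the covariance estimate $4C_0^2(1-2\sigma^2)^2/(n-r-2)$, and replacing $2C_0(1-2\sigma^2)$ by $2C_0(1-2\sigma^2)+1/\sqrt{2\pi}$ merely inflates the numerator of $P_1$ by a bounded factor. This does slightly degrade the explicit numerics in Theorem~\ref{thm:const-advantage-ex}, but it does not affect Theorem~\ref{thm:unbounded-advantage} or the qualitative conclusions. So: your proof is the paper's proof, and the residual discrepancy is in the paper's stated constant, not in your argument.
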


\section{Day One} \label{sec:firstday-const}
%\subsection{Bounding the number of Blue vertices after Day One}

We  first analyze  the situation after Day One. The main result for this section is Theorem \ref{thm:const-advantage-day-1}, which  bounds on the probability that we still have ``many'' Blue vertices left. 
Firstly, let us recall a few terms defined in Section \ref{subsec:main-theorem}.
\begin{equation*}
\sigma \ \defeq \ \sqrt{p(1 - p)},
\quad
P_1 \ \defeq \ \frac{1/4 + 4C_0^2\left(1 - 2\sigma^2\right)^2\cdot \frac{n - 1}{n - 2}} {\left(\sqrt{n - 1} \Phi_0 \left( \frac{2pc + \min\{p, 1- p\}}{\sigma \sqrt{n - 1}} \right) - \frac{C_0(1 - 2\sigma^2)}{\sigma} - \frac{C_1}{2p} - \eps_2 - \frac{1}{2\sqrt{n}}\right)^2}.
\end{equation*}
Define a new term $Q$ by
\begin{equation*}
Q = Q(n, p, d) \ \defeq \ \frac{1/4 + 4C_0^2\left(1 - 2\sigma^2\right)^2\cdot \frac{n - 1}{n - 2}} {\left(\sqrt{n - 1} \Phi_0 \left( \frac{2pc + \min\{p, 1- p\}}{\sigma \sqrt{n - 1}} \right) - \frac{C_0(1 - 2\sigma^2)}{\sigma} - d - \frac{1}{2\sqrt{n}}\right)^2}.
\end{equation*}
Observe that $Q\left(n, p, \frac{C_1}{2p} + \eps_2 \right) = P_1(n, p, \eps_2)$. 

\begin{theorem} \label{thm:const-advantage-day-1}
	Let $p\in (0, 1)$ and $c$ be constants and $\sigma, T_1, T_2, Q$ be defined above. Let $n, n^R, n^B\in \bb{N}$ such that $n^R + n^B = n$, $1\le n^B \le \frac{n}{2} - c$, and $d > 0$ such that
%	\begin{equation} \label{eq:d-n-condition-defn}
%	T_2 - d - \frac{T_1^3}{6\sqrt{2\pi}(n - 1)} - \frac{1}{2\sqrt{n}} > 0
%	\end{equation}
  \begin{equation} \label{eq:d-n-condition-defn}
  	\sqrt{n - 1} \Phi_0 \left( \frac{2pc + \min\{p, 1- p\}}{\sigma \sqrt{n - 1}} \right) - \frac{C_0(1 - 2\sigma^2)}{\sigma} \ > \ d + \frac{1}{2\sqrt{n}}.
  \end{equation}
	Then we have
	\begin{equation*} \label{eq:const-advantage-day-1}
	\Pr \left(\card{B_1} > \frac{n-1}{2} - d\sqrt{n} \quad \middle| \  \card{B_0} =n^B\right) \ \le \ Q\left(n, p, \frac{C_1}{2p} + \eps_2 \right)
	%\frac{\frac{1}{4} + 2C_0^2\left(1 - 2\sigma^2\right)^2\frac{n - 1}{n - 2}} {\left(T_2 - d - \frac{T_1^3}{6\sqrt{2\pi}(n - 1)} - \frac{1}{2\sqrt{n}} \right)^2}.
	\end{equation*}
\end{theorem}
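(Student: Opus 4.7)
The plan is to apply Chebyshev's inequality to $\card{B_1}$, which requires controlling its first two moments. The Berry--Esseen-type tools of Section 2 give me good control of both: Lemma \ref{thm:diff of binom r.v. gen.} for the per-vertex probability $\Pr(v \in B_1)$, and Lemma \ref{thm:1/sqrt{n}} for the pairwise covariances.

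\textbf{Bounding $\E{\card{B_1}}$.} For each vertex $v$, I express $\{v \in B_1\}$ via its Red- and Blue-neighbor counts $R(v), B(v)$. If $v \in R_0$, then $\{v \in B_1\} = \{B(v) > R(v)\}$ with $R(v) \sim \Bin(n^R-1,p)$, $B(v) \sim \Bin(n^B,p)$; if $v \in B_0$, then $\{v \in B_1\} = \{B(v) \ge R(v)\}$ with $R(v) \sim \Bin(n^R,p)$, $B(v) \sim \Bin(n^B-1,p)$. Applying Lemma \ref{thm:diff of binom r.v. gen.} (with $d=-1$ and $d=0$ respectively, and using $n^R - n^B \ge 2c$) and keeping the worse of the two cases yields
\begin{equation*}
\Pr(v \in B_1) \;\le\; \tfrac{1}{2} - \Phi_0\!\left(\tfrac{2pc + \min\{p,1-p\}}{\sigma\sqrt{n-1}}\right) + \tfrac{C_0(1-2\sigma^2)}{\sigma\sqrt{n-1}}.
\end{equation*}
The $\min\{p,1-p\}$ in the numerator is exactly the minimum of $(1-p)$ (Red case) and $p$ (Blue case). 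Summing over $v$ gives $\E{\card{B_1}} \le n/2 - n\Phi_0(\cdot) + C_0(1-2\sigma^2)n/(\sigma\sqrt{n-1})$.

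\textbf{Bounding $\Var{\card{B_1}}$.} Writing $\card{B_1} = \sum_v \mathbf{1}_{v\in B_1}$, the diagonal variances contribute at most $n/4$. For off-diagonal pairs $u \neq v$, the indicators $\mathbf{1}_{u\in B_1}, \mathbf{1}_{v \in B_1}$ are determined by disjoint edge sets except for the single edge $W_{uv}$; conditioning on $W_{uv}$ renders them independent. Using the identity $\Cov{f(Z),g(Z)} = p(1-p)(f(1)-f(0))(g(1)-g(0))$ for $Z \sim \Bin(1,p)$, the covariance factors as $p(1-p)\,\Delta_u\,\Delta_v$, where $\Delta_u := \Pr(u\in B_1 \mid W_{uv}=1) - \Pr(u \in B_1 \mid W_{uv}=0)$. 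A short case analysis over the four initial-color patterns of $\{u,v\}$ shows that each $|\Delta_u|$ equals $\Pr(B'(u) - R'(u) = k)$ for some fixed $k \in \{-1,0,1\}$, where $B'(u), R'(u)$ are the neighbor counts taken over $V\setminus\{u,v\}$ (sum of sizes $n-2$). Lemma \ref{thm:1/sqrt{n}} bounds $|\Delta_u| \le 2C_0(1-2\sigma^2)/(\sigma\sqrt{n-2})$, so each off-diagonal covariance has magnitude at most $4C_0^2(1-2\sigma^2)^2/(n-2)$, giving
\begin{equation*}
\Var{\card{B_1}} \;\le\; n\!\left[\tfrac{1}{4} + 4C_0^2(1-2\sigma^2)^2 \tfrac{n-1}{n-2}\right] \;=:\; nA,
\end{equation*}
where $A$ is precisely the numerator of $Q$.

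\textbf{Concluding via Chebyshev.} Set $t := (n-1)/2 - d\sqrt{n} - \E{\card{B_1}}$. Substituting the expectation bound and dividing through by $\sqrt{n}$, the desired inequality $t/\sqrt{n} \ge \sqrt{n-1}\Phi_0(\cdot) - C_0(1-2\sigma^2)/\sigma - d - 1/(2\sqrt{n}) =: B$ reduces, after cancellation, to $(\sqrt{n}-\sqrt{n-1})\bigl[\Phi_0(\cdot) - C_0(1-2\sigma^2)/(\sigma\sqrt{n-1})\bigr] \ge 0$, which is guaranteed by hypothesis \eqref{eq:d-n-condition-defn}. Chebyshev's inequality then yields $\Pr(\card{B_1} > (n-1)/2 - d\sqrt{n}) \le \Var{\card{B_1}}/t^2 \le nA/(nB^2) = A/B^2 = Q$. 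The main obstacle is not any single Berry--Esseen estimate but the careful bookkeeping of the $\sqrt{n}$, $\sqrt{n-1}$, $\sqrt{n-2}$ factors arising in the three separate estimates, and verifying that the particular form of $Q$'s denominator (with its $1/(2\sqrt{n})$ slack) absorbs these discrepancies cleanly; a secondary subtlety is checking uniformity of the covariance bound across all four initial-color patterns of $(u,v)$.
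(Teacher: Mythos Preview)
Your proof is correct and follows essentially the same approach as the paper: a second-moment (Chebyshev) argument with the per-vertex expectation bounded via Lemma~\ref{thm:diff of binom r.v. gen.} and the pairwise covariances bounded via the conditioning-on-$W_{uv}$ trick together with Lemma~\ref{thm:1/sqrt{n}}. The paper packages these two estimates as Corollaries~\ref{col:day1-exp} and~\ref{col:day1-covar} (stated more generally, allowing conditioning on a set $S$ of prescribed edges) and works with $X=\card{R_1}$ rather than $\card{B_1}$, but the content and the bookkeeping of the $\sqrt{n},\sqrt{n-1},\sqrt{n-2}$ factors are identical to yours.
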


This theorem states that Red will (with a given probability) increase its advantage from $2c$ to at least $2d\sqrt{n} + 1$ after Day One. 

The proof relies on a Chebyshev concentration bound on the sum $\card{B_1} = \sum_{u\in V} \I_1(u)$. The next step is thus analyzing the indicators $\I_1(u)$ and their dependence.

\subsection{Bounds on Day One Red indicators and their dependence}

\begin{lemma} \label{lem:day1-to-binom}
Let $u$ be any vertex, $S = \{v_1, v_2, \ldots, v_r\}$ be a set of vertices. Then
\begin{equation*}
\Pr \left(u\in R_1 \mid W_{uv_i}, \ i = 1,\ldots, r \right)
= \Pr \left(Y_1 > Y_2 - \sum_{i=1}^r \J_0(v_i)W_{vv_i} - \I_0(u)  \right),
\end{equation*}
where $ \quad \left\{
\begin{array}{ll}
Y_1 & \sim \Bin\left(\card{R_0} - \sum_{i=1}^r \I_0(v_i) - \I_0(u), \ p \right) \\
Y_2 & \sim \Bin\left(\card{B_0} + \sum_{i=1}^r \I_0(v_i) + \I_0(u) - r - 1, \ p \right)
\end{array} \right.$.
\end{lemma}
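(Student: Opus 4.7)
My plan is to unroll the definition of the majority update at vertex $u$, split the neighborhood contribution into the part that is frozen by the conditioning on $W_{uv_1},\ldots,W_{uv_r}$, and identify the still-random part as two independent binomial counts.

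The first step is to encode the tie-breaking rule algebraically. Writing $A = \sum_{v\ne u}W_{uv}\I_0(v)$ and $B = \sum_{v\ne u}W_{uv}(1-\I_0(v))$ for the numbers of Red and Blue neighbours of $u$ at day zero, the update rule (``majority, keep your colour on a tie'') is equivalent to the single inequality $A > B - \I_0(u)$: when $u\in R_0$ this reads $A \ge B$, and when $u\in B_0$ it reads $A \ge B+1$, matching the two cases of the rule. Thus
\begin{equation*}
\Pr\!\left(u\in R_1 \mid W_{uv_1},\ldots,W_{uv_r}\right) \;=\; \Pr\!\left(A > B - \I_0(u) \,\middle|\, W_{uv_1},\ldots,W_{uv_r}\right).
\end{equation*}

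Next, I would split $V = S \sqcup \{u\} \sqcup T$ with $T = V\setminus(S\cup\{u\})$ and decompose $A = A_S + A_T$, $B = B_S + B_T$. Conditional on $W_{uv_1},\ldots,W_{uv_r}$, the quantities $A_S = \sum_{i=1}^r W_{uv_i}\I_0(v_i)$ and $B_S = \sum_{i=1}^r W_{uv_i}(1-\I_0(v_i))$ are constants, while the edges $\{W_{uv}:v\in T\}$ remain i.i.d.\ $\mathrm{Ber}(p)$ and independent of the conditioning. Since the initial colouring is deterministic, this immediately gives $A_T\sim \Bin(|R_0\cap T|,p)$ and $B_T\sim \Bin(|B_0\cap T|,p)$ independently. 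A direct count yields $|R_0\cap T| = |R_0| - \sum_i \I_0(v_i) - \I_0(u)$ and $|B_0\cap T| = |B_0| - \sum_i(1-\I_0(v_i)) - (1-\I_0(u)) = |B_0| + \sum_i \I_0(v_i) + \I_0(u) - r - 1$, which are exactly the parameters claimed for $Y_1 = A_T$ and $Y_2 = B_T$.

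Finally, using $\J_0(v_i) = 2\I_0(v_i) - 1$, one finds $B_S - A_S = -\sum_{i=1}^r W_{uv_i}\J_0(v_i)$, and rearranging $A_T + A_S > B_T + B_S - \I_0(u)$ produces
\begin{equation*}
Y_1 \;>\; Y_2 - \sum_{i=1}^r \J_0(v_i)\,W_{uv_i} - \I_0(u),
\end{equation*}
which is the identity in the lemma. I expect no genuine obstacle here, only careful bookkeeping; the one subtle point is the encoding of the tie-breaking rule, and it is precisely this that produces the additional $-\I_0(u)$ term on the right-hand side.
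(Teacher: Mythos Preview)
Your proposal is correct and follows essentially the same route as the paper's proof: both encode the majority-with-tie-break rule as the single inequality $A - B + \I_0(u) > 0$, split the neighbourhood sum over $S$, $\{u\}$, and the remaining vertices, and identify the unconditioned Red and Blue contributions as independent binomials with the stated parameters. Your write-up is in fact slightly more explicit than the paper's about why the tie-breaking rule yields the $-\I_0(u)$ shift; otherwise the arguments are line-for-line equivalent (note both implicitly assume $u\notin S$, which is how the lemma is applied downstream).
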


\begin{proof}
Observe that, given the values of $W_{uv_i}$ for $i = 1,\ldots, r$, we have
\begin{equation*}
u\in R_1 \iff \sum_{v\in V} \J_0(v)W_{uv} + \I_0(v) > 0 
\iff \sum_{v\in (V\setminus S)\setminus \{u\}} \J_0(v)W_{uv} > -\I_0(u) - \sum_{i=1}^r \J_0(v_i)W_{uv_i}.
\end{equation*}
We have $(V\setminus S)\setminus \{u\} = V_1 \cup V_2$ where $V_1 \defeq (R_0\setminus S)\setminus \{u\}$ and $V_2 \defeq (B_0\setminus S)\setminus \{u\}$.
Since $(-1)^{l-1}\sum_{v\in V_l} \J_0(v)W_{uv} \sim \Bin(\card{V_l}, p)$ for $l = 1, 2$ and
\begin{equation*}
\begin{aligned}
\card{V_1} & \ = \ \card{R_0} - \sum_{i=1}^r \I_0(v_i) - \I_0(u), \\
\card{V_2} & \ = \ \card{B_0} - \sum_{i=1}^r \left(1 - \I_0(v_i)\right) - \left(1 - \I_0(u)\right) \ = \ \card{B_0} +  \sum_{i=1}^r \I_0(v_i) + \I_0(u) - r - 1
\end{aligned},
\end{equation*}
the result follows.
\end{proof}

This lemma leads to the following two useful corollaries.

\begin{corollary} \label{col:day1-exp}
Under the same setting as in Lemma \ref{lem:day1-to-binom}, we have
\begin{equation*}
\E { \I_1(u) \mid W_{uv_i}, \ i = 1,\ldots, r }
\ge \frac{1}{2} + \Phi_0\left(
\frac{T}{\sigma \sqrt{n - r - 1}}\right) - \frac{C_0\left(1 - 2\sigma^2\right)}{\sigma\sqrt{n - r - 1}},
\end{equation*}
where $T\ \defeq \ \displaystyle 2pc + \sum_{i=1}^r \J_0(v_i)(W_{uv_i} - p) + \min\{p, 1-p\}$.
\end{corollary}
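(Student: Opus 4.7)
The plan is to combine Lemma \ref{lem:day1-to-binom} with the Berry--Esseen bound on differences of binomials (Lemma \ref{thm:diff of binom r.v. gen.}). Since $\E{\I_1(u) \mid W_{uv_i}} = \Pr(u \in R_1 \mid W_{uv_i})$, the left-hand side of the corollary equals $\Pr(Y_1 > Y_2 + d)$ where $d \defeq -\sum_{i=1}^r \J_0(v_i)W_{uv_i} - \I_0(u)$ is an integer, and $Y_1 \sim \Bin(n_1, p)$, $Y_2 \sim \Bin(n_2, p)$ with $n_1, n_2$ as in Lemma \ref{lem:day1-to-binom}. The first easy computation is $n_1 + n_2 = |R_0| + |B_0| - r - 1 = n - r - 1$, which supplies the denominator $\sigma\sqrt{n - r - 1}$ appearing in the target bound.

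The crux of the argument is to lower bound $p(n_1 - n_2) - d$ by $T$. Using the identity $\sum_{i=1}^r(1 - 2\I_0(v_i)) = -\sum_{i=1}^r \J_0(v_i)$, one finds
\begin{equation*}
n_1 - n_2 \ = \ |R_0| - |B_0| + 1 - 2\I_0(u) - \sum_{i=1}^r \J_0(v_i),
\end{equation*}
and then a short algebraic manipulation gives
\begin{equation*}
p(n_1 - n_2) - d \ = \ p(|R_0| - |B_0|) \ + \ \sum_{i=1}^r \J_0(v_i)(W_{uv_i} - p) \ + \ \bigl(p + (1 - 2p)\I_0(u)\bigr).
\end{equation*}
The last bracketed term equals $p$ if $u$ is Blue and $1 - p$ if $u$ is Red, so it is at least $\min\{p, 1-p\}$; combined with the hypothesis $|R_0| - |B_0| \ge 2c$, this produces the lower bound $p(n_1 - n_2) - d \ge T$. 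Invoking Lemma \ref{thm:diff of binom r.v. gen.} and using the monotonicity of $\Phi_0$ on the resulting expression then yields the claimed inequality.

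The only subtle point is that Lemma \ref{thm:diff of binom r.v. gen.} formally requires $d < p(n_1 - n_2)$, i.e., $T > 0$ in the worst case. When $T \le 0$, the right-hand side of the corollary is at most $\tfrac{1}{2} - C_0(1 - 2\sigma^2)/(\sigma\sqrt{n - r - 1})$, which is no larger than $\tfrac{1}{2}$ minus a positive quantity, so it can be verified either by the trivial bound $\Pr(u \in R_1 \mid \cdots) \ge 0$ (when the bound is negative) or by applying a symmetric version of Lemma \ref{thm:diff of binom r.v. gen.} exchanging the roles of $Y_1$ and $Y_2$. I expect this degenerate regime will pose no real difficulty, since the interesting application of the corollary in Section \ref{sec:firstday-const} is in the regime where $T$ is positive and of order $\sqrt{n}$.
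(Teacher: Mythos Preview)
Your proof is correct and follows essentially the same route as the paper: apply Lemma~\ref{lem:day1-to-binom}, feed the resulting binomial difference into Lemma~\ref{thm:diff of binom r.v. gen.}, compute $n_1+n_2=n-r-1$ and $p(n_1-n_2)-d = p(|R_0|-|B_0|) + \sum_i \J_0(v_i)(W_{uv_i}-p) + (p+(1-2p)\I_0(u))$, and bound the last two ingredients by $2pc$ and $\min\{p,1-p\}$.

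You are actually more careful than the paper on one point: the paper applies Lemma~\ref{thm:diff of binom r.v. gen.} without checking its hypotheses $n_1>n_2$ and $d<p(n_1-n_2)$, whereas you flag the latter and sketch how to dispose of the degenerate case $T\le 0$. Your sketch is fine; alternatively, note that the underlying Berry--Esseen estimate (Corollary~\ref{lem:strongesseen}) for $Y_1-Y_2$ holds with no sign restriction, so the inequality in Lemma~\ref{thm:diff of binom r.v. gen.} is in fact valid for all integers $d$ and all $n_1,n_2$, making the case split unnecessary.
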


\begin{proof}
From lemmas \ref{lem:day1-to-binom} and \ref{thm:diff of binom r.v. gen.}, we get
\begin{equation*}
\Pr \left(u\in R_1 \mid W_{uv_i}, \ i = 1,\ldots, r \right)
\ge \frac{1}{2} + \Phi_0\left(\frac{p(n_1 - n_2) - d}{\sigma\sqrt{n_1 + n_2}}\right) - \frac{C_0\left(1 - 2\sigma^2\right)}{\sigma\sqrt{n_1 + n_2}},
\end{equation*}
where $n_1 = \card{R_0} - \sum_{i=1}^t \I_0(v_i) - \I_0(v)$, $n_2 = \card{B_0} +  \sum_{i=1}^t \I_0(v_i) + \I_0(v) - r - 1$ and $d = -\I_0(u) - \sum_{i=1}^r \J_0(v_i)W_{uv_i}$. Therefore
\begin{align*}
p(n_1 - n_2) - d & \ = \ p \left( \card{R_0} - \card{B_0} - \sum_{i=1}^r \J_0(v_i) - \J_0(v) \right) + \I_0(u) + \sum_{i=1}^r \J_0(v_i)W_{uv_i} \\
& \ = \ p(\card{R_0} - \card{B_0}) + \sum_{i=1}^r \J_0(v_i)(W_{uv_i} - p) + (1 - 2p)\I_0(u) + p \\
n_1 + n_2 & \ = \ \card{R_0} + \card{B_0} - r - 1 = n - r - 1.
\end{align*}
Since $\E { \I_1(u) \mid W_{uv_i}, \ i = 1,\ldots, r } = \Pr \left(u\in R_1 \mid W_{uv_i}, \ i = 1,\ldots, r \right)$, $\card{R_0} - \card{B_0} \ge 2c$ and $(1 - 2p)\I_0(u) + p \ge \min \{p, 1- p\}$, the result then follows.
\end{proof}

\begin{corollary} \label{col:day1-covar}
Assume the setting Lemma \ref{lem:day1-to-binom}.
Let $W_1, W_2, \ldots, W_r$ be arbitrary constants in $\{0, 1\}$, then
\begin{equation*}
\Cov {\I_1(u_1), \I_1(u_2) \mid W_{u_1v_i} = W_{u_2v_i} = W_i, \ i = 1,\ldots,r }  \le \frac{4C_0^2(1 - 2\sigma^2)^2}{n - r - 2}.
\end{equation*}
\end{corollary}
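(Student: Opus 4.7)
The plan is to condition on the single potential edge $W_{u_1 u_2}$ in order to decouple $\I_1(u_1)$ and $\I_1(u_2)$. Write $\ca{F}$ for the $\sigma$-algebra generated by the initial coloring and by the event $\{W_{u_1 v_i} = W_{u_2 v_i} = W_i\}_{i=1}^r$, and set $W \defeq W_{u_1 u_2}$. Given $\ca{F}$ together with $W$, the indicator $\I_1(u_j)$ is a function only of the edges from $u_j$ to $V\setminus(S\cup\{u_1,u_2\})$; the two resulting families of edges are disjoint and hence independent, so $\I_1(u_1)$ and $\I_1(u_2)$ are conditionally independent given $(\ca{F},W)$. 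By the law of total covariance the inner term vanishes and
\[
\Cov{\I_1(u_1), \I_1(u_2) \mid \ca{F}} \;=\; \Cov{f_1(W), f_2(W) \mid \ca{F}},
\]
where $f_j(w) \defeq \E{\I_1(u_j) \mid \ca{F},\, W=w}$ for $w\in\{0,1\}$.

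Since $W \sim \Bin(1, p)$ is independent of $\ca{F}$ and every real function on $\{0,1\}$ is affine, the right-hand side simplifies exactly to $\sigma^2 (f_1(1) - f_1(0))(f_2(1) - f_2(0))$. So it suffices to bound each difference $|f_j(1)-f_j(0)|$. I would apply Lemma \ref{lem:day1-to-binom} to $u_j$ with the enlarged set $S\cup\{u_{3-j}\}$: the event $\{\I_1(u_j) = 1\}$ then becomes $\{Y_1 > Y_2 + D - \J_0(u_{3-j})\,W\}$, where $Y_1,Y_2$ are independent binomials whose parameters are $\ca{F}$-measurable and sum to $n - r - 2$, and $D$ is an $\ca{F}$-measurable integer. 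Because $\J_0(u_{3-j})\in\{-1,+1\}$, toggling $W$ shifts the threshold by exactly one, and therefore $|f_j(1) - f_j(0)|$ equals a single-point probability $\Pr(Y_1 - Y_2 = d)$ for some integer $d$. By Lemma \ref{thm:1/sqrt{n}} (after swapping the roles of $Y_1$ and $Y_2$ if $d \le 0$), this point probability is at most $2C_0(1-2\sigma^2)/(\sigma \sqrt{n-r-2})$.

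Multiplying the two bounds and cancelling the factor $\sigma^2$ produces the desired estimate $4C_0^2(1-2\sigma^2)^2/(n-r-2)$. The main delicate step is the verification that the threshold shift as $W$ flips is \emph{exactly} one, since this is what collapses a difference of binomial tail probabilities into a point mass and so lets Lemma \ref{thm:1/sqrt{n}} supply the $1/\sqrt{n}$ decay; beyond that bookkeeping, the argument is a direct conditional calculation and no further obstacle is anticipated.
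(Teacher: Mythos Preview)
Your proposal is correct and follows essentially the same route as the paper: condition on the single edge $W_{u_1u_2}$ to render $\I_1(u_1)$ and $\I_1(u_2)$ conditionally independent, obtain $\Cov = \sigma^2(f_1(1)-f_1(0))(f_2(1)-f_2(0))$, then use Lemma~\ref{lem:day1-to-binom} with the enlarged set $S\cup\{u_{3-j}\}$ to identify each difference as a single point mass of $Y_1-Y_2$ and bound it via Lemma~\ref{thm:1/sqrt{n}}. The only cosmetic difference is that you invoke the law of total covariance explicitly, whereas the paper expands $\Pr(u_1,u_2\in R_1)$ and $\Pr(u_j\in R_1)$ by hand to reach the same identity $pq(a_1-b_1)(a_2-b_2)$.
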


\begin{proof}
We impose the condition $\big\{W_{u_1v_i} = W_{u_2v_i} = W_i, \ i = 1,\ldots,r\big\}$ on every event within this proof so as not to repeat it in the equations. Note that the covariances are not automatically zero, as the indicators are not independent. By definition 
\begin{equation*}
\Cov{\I_1(u_1), \I_1(u_2)} \ = \ \Pr\left(u_1, u_2 \in R_1\right) - \Pr\left(u_1 \in R_1\right) \Pr\left(u_2 \in R_1\right).
\end{equation*}
Consider the event $\{u_1, u_2 \in R_1\}$,  $\Pr\left(u_1, u_2 \in R_1\right)$ can be written as 
$$  \Pr\left(u_1, u_2 \in R_1 \mid u_1 \adj u_2\right)\Pr(u_1 \adj u_2) + \ \Pr\left(u_1, u_2 \in R_1 \mid u_1 \not\adj u_2\right)\Pr(u_1 \not\adj u_2). $$
Notice  that after we specify the adjacency between $u_1$ and $u_2$, the neighborhoods of $u_1$ and $u_2$ are independent. Thus, we can write the above as
\begin{equation*}
\begin{aligned}
& p \ \cdot \ \underbrace{\Pr\left(u_1 \in R_1 \mid u_1 \adj u_2\right)}_{a_1} \cdot  \underbrace{\Pr\left(u_2 \in R_1 \mid u_1 \adj u_2\right)}_{a_2}
\\
& \ + \ (1-p) \ \cdot \ \underbrace{\Pr\left(u_1 \in R_1 \mid u_1 \not\adj u_2\right)}_{b_1} \cdot \underbrace{\Pr\left(u_2 \in R_1 \mid u_1 \not\adj u_2\right)}_{b_2}.
\end{aligned}
\end{equation*}
 Using shorthand  $q: = 1 - p$,  we obtain
 $\ \Pr\left(u_1, u_2 \in R_1\right) = pa_1a_2 + qb_1b_2.$
Consider the product $\Pr\left(u_1 \in R_1\right) \Pr\left(u_2 \in R_1\right)$. Splitting up the two events by $\{u_1\adj u_2 \}$ gives
$\Pr\left(u_1 \in R_1\right) =  pa_1 + qb_1$ and $\ \Pr\left(u_2 \in R_1\right) = pa_2 + qb_2.$
Putting everything together, we have
\begin{equation} \label{eq:nbh-temp34}
\begin{aligned}
\Cov{\I_1(u_1), \I_1(u_2)} \ & = \ pa_1a_2 + qb_2b_2 - \left(pa_1 + qb_1\right) \left(pa_2 + qb_2 \right) \\
\ & = \ pq(a_1 - b_1)(a_2 - b_2) = \sigma^2(a_1 - b_1)(a_2 - b_2).
\end{aligned}
\end{equation}
We next analyze the relationship between $a_1$ and $b_1$. (The analysis for $a_2$ and $b_2$ is similar.) Apply Lemma \ref{lem:day1-to-binom} to the set $S' = \{v_1, v_2, \ldots, v_r, u_2\}$, we get
\begin{equation*}
\begin{aligned}
a_1 & = \Pr \left(u\in R_1 \mid W_{u_1v_i} = W_i, W_{u_1u_2} = 1 \right)
= \Pr \left(Y_1 > Y_2 - T_1  - \J_0(u_2) \right), \\
b_1 & = \Pr \left(u\in R_1 \mid W_{u_1v_i} = W_i, W_{u_1u_2} = 0 \right)
= \Pr \left(Y_1 > Y_2 - T_1 \right),
\end{aligned}
\end{equation*}
where $ \ \left\{
\begin{array}{ll}
Y_1 & \sim \Bin\left(\card{R_0} - T_2, \ p \right) \\
Y_2 & \sim \Bin\left(\card{B_0} + T_2 - r - 2, \ p \right)
\end{array} \right.$, for $ \ \left\{
\begin{array}{ll}
T_1 & \defeq \ \sum_{i=1}^r \J_0(v_i)W_i - \I_0(u_1) \\
T_2 & \defeq \ \sum_{i=1}^r \I_0(v_i) + \I_0(u_2) + \I_0(u)
\end{array}\right.$.

\noindent Case analysis on $\J_0(u_2)$:
$ \left\{
\begin{array}{ll}
\J_0(u_2) = 1 & \!\!  \Rightarrow \ a_1 - b_1 = \Pr\left(Y_1 = Y_2 - T_1\right) \\
 \J_0(u_2) = -1 & \!\! \Rightarrow \ b_1 - a_1 = \Pr\left(Y_1 = Y_2 - T_1 + 1\right)
\end{array} \right.
$.

\noindent In any case, by Lemma \ref{thm:1/sqrt{n}} we have
\begin{equation*}
\card{a_1-b_1} \ \le \ \frac{2C_0\left(1 - 2\sigma^2\right)}{\sigma\sqrt{(\card{R_0} - T_2) + \card{B_0} + T_2 - r - 2}}
\ = \ \frac{2C_0\left(1 - 2\sigma^2\right)}{\sigma\sqrt{n - r - 2}}.
\end{equation*}

\noindent With  the same analysis  for $a_2$ and $b_2$, we obtain
$\card{a_2-b_2} \le \frac{2C_0\left(1 - 2\sigma^2\right)}{\sigma\sqrt{n-r-2}}$.
By Equation \eqref{eq:nbh-temp34},  we have 
\begin{equation*} \label{eq:nbh-covariance} 
\Cov{\I_1(u_1), \I_1(u_2)} \le \sigma^2\left(\frac{2C_0\left(1 - 2\sigma^2\right)}{\sigma\sqrt{n-r-2}}\right)^2 
= \frac{4C_0^2\left(1 - 2\sigma^2\right)^2}{n-r-2}.
\end{equation*}\\[-3em]
\end{proof}

Corollaries \ref{col:day1-exp} and \ref{col:day1-covar} are powerful in the sense they are useful in assessing the number of Red vertices in any subset of $V$, under broad conditions about their neighborhoods. Now we apply them to bound the number of Blue vertices in Day 2 with lemma in the next part.

\subsection{Proof of Theorem \ref{thm:const-advantage-day-1}}

%\begin{proof}

Set $X \defeq \card{R_1} = \sum_{v\in V}\I_1(v)$. We aim to lower-bound the probability that $X < \frac{n + 1}{2} + d\sqrt{n}$ for some constant $d$.
For each $v\in V$, applying Corollary \ref{col:day1-exp} for the set $W = \varnothing$, we get
\begin{equation*}
\E{\I_1(v)} \ge \frac{1}{2}
+ \Phi_0\left(\frac{2pc + \min\left\{p, 1 - p\right\}}{\sigma\sqrt{n - 1}}\right)
- \frac{C_0\left(1 - 2\sigma^2\right)}{\sigma\sqrt{n - 1}}.
\end{equation*}
%The  indicators are not independent and we  use Chebysev's inequality.
%
%\subsection{Step 1: Expectation} \label{subsec:day-1-expectation}
%Our first step is  to bound  $\E{\I_1(v)}$ from below for each $v\in V$.
%We have 
%\begin{equation*}
%X_v = d^R_0(v) - d^B_0(v) \hspace{5pt} \text{ and } \hspace{5pt} X_v > 0 \implies \I_1(v) = 1.
%\end{equation*}
%By our rule, 
%\begin{equation} \label{eq:a_1 meaning}
%\forall v\in V : v \in R_1 \iff d^R_0(v) + \I_0(v) > d^B_1(v_1).
%\end{equation}
%Note that $d^R_0(v) \sim \Bin\left(n^R - \I_0(v), p\right)$ and $d^B_0(v) \sim \Bin\left(n^B + \I_0(v) - 1, p\right)$.
%By Theorem \ref{thm:diff of binom r.v. gen.}, we have:
%\begin{equation*} \label{eq:i1-v-bound}
%\begin{aligned}
%& \E{\I_1(v)} = \Pr\left(\I_1(v) = 1\right) = \Pr\left(d^R_0(v_i) + \I_0(v_1) > d^B_0(v_i) \right) \\
%& \ge \frac{1}{2} + \Phi_0\left(\frac{p\left(n^R - n^B + 1 - 2\I_0(v)\right) + \I_0(v)}{\sigma\sqrt{n^R + n^B - 1}}\right)
%- \frac{C_0\left(1 - 2\sigma^2\right)}{\sigma\sqrt{n^R + n^B - 1}} \\
%& = \frac{1}{2} + \Phi_0\left(\frac{2pc_0 + p_v}{\sigma\sqrt{n - 1}}\right)
%- \frac{C_0\left(1 - 2\sigma^2\right)}{\sigma\sqrt{n - 1}} = \frac{1}{2} + \Delta_v,
%\end{aligned}
%\end{equation*}
%where
%\begin{align}
%p_v & \defeq p\left(1 - \I_0(v)\right) + (1 - p)\I_0(v)
%= \begin{cases}
%p & \text{ if } v\in B_0 \\ 1 - p & \text{ if } v\in R_0
%\end{cases}, \label{eq:p-v-defn}
%\\
%\Delta_v & \defeq \Phi_0\left(\frac{2pc_0 + p_v}{\sigma\sqrt{n - 1}}\right)
%- \frac{C_0\left(1 - 2\sigma^2\right)}{\sigma\sqrt{n - 1}}. \label{eq:delta-v-defn}
%\end{align}
Therefore
\begin{equation} \label{eq:expectation-bound0}
\begin{aligned}
\E{X} & \ge n\left[\frac{1}{2}
+ \Phi_0\left(\frac{2pc + \min\left\{p, 1 - p\right\}}{\sigma\sqrt{n - 1}}\right)
- \frac{C_0\left(1 - 2\sigma^2\right)}{\sigma\sqrt{n - 1}}\right] \\
& \ge \frac{n}{2} + \left[\sqrt{n - 1}\Phi_0\left(\frac{2pc + \min\left\{p, 1 - p\right\}}{\sigma\sqrt{n - 1}}\right)
- \frac{C_0\left(1 - 2\sigma^2\right)}{\sigma} \right] \sqrt{n},
%& = n\left[\frac{1}{2} +  \frac{T_2}{\sqrt{n - 1}} + \Phi_0\left(\frac{T_1}{\sqrt{n - 1}}\right) - \frac{T_1}{\sqrt{2\pi(n - 1)}} \right],
\end{aligned}
\end{equation}
By \eqref{eq:d-n-condition-defn}, we further have
\begin{equation*}
\begin{aligned}
& \E{X} - \left(\frac{n + 1}{2} + d\sqrt{n}\right) \\
& \ \ge  \ \left[\sqrt{n - 1} \Phi_0 \left( \frac{2pc + \min\{p, 1- p\}}{\sigma \sqrt{n - 1}} \right) - \frac{C_0(1 - 2\sigma^2)}{\sigma} - d - \frac{1}{2\sqrt{n}}\right] \sqrt{n} \ > \ 0.
\end{aligned}
\end{equation*}
Now consider $\Var{X} \displaystyle = \sum_{v\in V}\Var{\I_1(v)} + 2\sum_{v_1\neq v_2}\Cov{\I_1(v_1), \I_1(v_2)}$. Since $\I_1(v)$ is Bernoulli, $\Var{\I_1(v)} \le 1/4$. For each $v_1\neq v_2$, applying Corollary \ref{col:day1-covar} for $W = \varnothing$, we get
\begin{equation*}
\Cov{\I_1(v_1), \I_1(v_2)} \le \frac{4C_0^2\left(1 - 2\sigma^2\right)^2}{n-2}.
\end{equation*}
Therefore
%\begin{equation} \label{eq:variance-bound}
%\begin{aligned}
%& \Var{X} = \sum_{i=1}^{k}\Var{\I_1(v_1)} + 2 \sum_{v_1\neq v_2}\Cov{\I_1(v_1), \I_1(v_2)} \\
%& \le \frac{n}{4} + 2\cdot \frac{4C_0^2\left(1 - 2\sigma^2\right)^2}{n-2}\cdot \frac{n(n-1)}{2}
%= \frac{n}{4} + 4C_0^2\left(1 - 2\sigma^2\right)^2\cdot \frac{n(n-1)}{n - 2}.
%\end{aligned}
%\end{equation}
\begin{equation} \label{eq:variance-bound}
\Var{X} \ \le \ \frac{n}{4} + 2\frac{4C_0^2\left(1 - 2\sigma^2\right)^2}{n-2} \binom{n}{2}
= \frac{n}{4} + 4C_0^2\left(1 - 2\sigma^2\right)^2 \frac{n(n-1)}{n - 2}.
\end{equation}
%
%\textbf{Bounding $X$  by Chebyshev''s inequality }
%
%We showed in \eqref{eq:variance-bound} and \eqref{eq:expectation-bound0} that
%\begin{equation*} \label{eq:exp-var-bounds}
%\begin{aligned}
%\Var{X} & \ \le \ \frac{n}{4} + 2C_0^2\left(1 - 2\sigma^2\right)^2\cdot \frac{n(n-1)}{n - 2}, \\
%\E{X} & \ \ge \ \frac{n}{2} + \left[\sqrt{n - 1}\Phi_0\left(\frac{2pc + \min\left\{p, 1 - p\right\}}{\sigma\sqrt{n - 1}}\right)
%- \frac{C_0\left(1 - 2\sigma^2\right)}{\sigma} \right] \sqrt{n}.
%\end{aligned}
%\end{equation*}
%%where $T_1$ and $T_2$ are defined respectively in \eqref{eq:t1-defn} and \eqref{eq:t2-defn}.
%Note that by \eqref{eq:d-n-condition-defn},
%\begin{equation*}
%\E{X} - \left(\frac{n + 1}{2} + d\sqrt{n}\right)
%\ \ge  \ \left[\sqrt{n - 1} \Phi_0 \left( \frac{2pc + \min\{p, 1- p\}}{\sigma \sqrt{n - 1}} \right) - \frac{C_0(1 - 2\sigma^2)}{\sigma} - d - \frac{1}{2\sqrt{n}}\right] \sqrt{n} \ > \ 0.
%\end{equation*}
Now by Chebyshev' s inequality 
\begin{equation*} \label{eq:chebyshev-X-bound}
\begin{aligned}
& \Pr \left(\card{B_1} > \frac{n - 1}{2} - d\sqrt{n} \right) \ = \ \Pr \left(X < \frac{n + 1}{2} + d\sqrt{n} \right) \ \le \ \frac{\Var{X}}{\left(\E{X} - \frac{n+1}{2} - d\sqrt{n}\right)^2}  \\
&  \ \le \ \frac{\frac{n}{4} + 4C_0^2\left(1 - 2\sigma^2\right)^2\cdot \frac{n(n-1)}{n - 2}}
{\left[\sqrt{n - 1} \Phi_0 \left( \frac{2pc + \min\{p, 1- p\}}{\sigma \sqrt{n - 1}} \right) - \frac{C_0(1 - 2\sigma^2)}{\sigma} - d - \frac{1}{2\sqrt{n}}\right]^2 n}
\ = \ Q.
%& \ \le \ \ \frac{\frac{n}{4} + 2C_0^2\left(1 - 2\sigma^2\right)^2\cdot \frac{n(n-1)}{n - 2}} {\left(T_2 - d - \frac{1}{2\sqrt{n}} - \frac{T_1^3}{6\sqrt{2\pi}(n - 1)} \right)^2 n} \
%= \ \frac{\frac{1}{4} + 2C_0^2\left(1 - 2\sigma^2\right)^2\cdot \frac{n - 1}{n - 2}} {\left(T_2 - d - \frac{1}{2\sqrt{n}} - \frac{T_1^3}{6\sqrt{2\pi}(n - 1)} \right)^2}.
\end{aligned}
\end{equation*}
The proof for Theorem \ref{thm:const-advantage-day-1} is complete.
%\end{proof}

%%%%%%%%%%%%%%%%%%%%%%%%%%%%%%%%%%%%%%%%%%%%%%%%%%%%%%%%%%
% ========================================================
% ========= SECTION 4 ====================================
% ========= UNIVERSAL WINS ===============================
% ========================================================
%%%%%%%%%%%%%%%%%%%%%%%%%%%%%%%%%%%%%%%%%%%%%%%%%%%%%%%%%%
\section{DAY Two and after } \label{sec:universal-win}

%The following definition, \emph{universal reduction}, is useful in the analysis.
Next, we analyze the situation after Day 1. We handle the loss of independence with a method called \emph{universal reduction}.

\begin{definition} \label{defn:uni-reduce}
	A graph $G = (V, E)$ is said to \emph{universally reduce} $m_1$ to $m_2$, where $m_1 \ge m_2$, if for any coloring of $V$ where the Blue set has at most $m_1$ vertices,  the Blue set in the next day has at most $m_2$ vertices.
	\end{definition}
	
We write  $G: m_1 \univto m_2 $.
Notice that it is irrelevant to specify the day in this definition. 
The following lemma is the simplest example of universal reduction, where a sufficiently small Blue camp can be reduced to zero when all vertices have more neighbors than twice its size. It is also useful as a final step for `` shrinking sequences'', i.e. arguments involving shrinking the Blue camp gradually to zero.

\begin{lemma} \label{lem:degree-bound}
For each $p, s\in (0, 1)$ and $n\in \N$,
\begin{equation*}
\Pr \left(G: \frac{1}{2}sp(n-1) \univto 0 \right) \ge 1 - n\exp\left(- (1 - s + s\log s)p(n-1) \right).
\end{equation*}
\end{lemma}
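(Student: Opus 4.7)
The plan is to deduce the lemma from a simple minimum-degree condition on $G$. The key step is a deterministic observation: if every vertex $v$ satisfies $\deg(v) > sp(n-1)$, then $G$ universally reduces $\frac{1}{2}sp(n-1)$ to $0$. Indeed, for any coloring whose Blue set $B$ has $|B|\le \frac{1}{2}sp(n-1)$, each vertex $v$ has at most $\frac{1}{2}sp(n-1)$ Blue neighbors and therefore at least $\deg(v) - \frac{1}{2}sp(n-1) > \frac{1}{2}sp(n-1)$ Red neighbors, so the majority rule turns every $v$ Red on the next day, regardless of its current color. This gives the inclusion
\[
\left\{G: \tfrac{1}{2}sp(n-1) \univto 0\right\} \ \supseteq \ \bigcap_{v\in V}\bigl\{\deg(v) > sp(n-1)\bigr\},
\]
so it suffices to upper-bound the probability of the complementary event.

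For each fixed vertex $v$, the degree $\deg(v)\sim \Bin(n-1,p)$ has mean $p(n-1)$, and since $s<1$ the event $\{\deg(v)\le sp(n-1)\}$ is a lower multiplicative deviation from the mean. I would invoke the standard multiplicative Chernoff lower-tail estimate
\[
\Pr\bigl(\Bin(N,p) \le s N p\bigr) \ \le \ \exp\bigl(-(1 - s + s\log s)Np\bigr),
\]
which comes from optimizing the Cram\'er transform of a Bernoulli, and apply it with $N = n-1$ to get $\Pr(\deg(v) \le sp(n-1)) \le \exp(-(1-s+s\log s)p(n-1))$. A union bound over the $n$ choices of $v$ then yields the stated probability bound.

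No serious obstacle is anticipated: the argument combines a deterministic sufficiency reduction with a textbook Chernoff estimate, and the exponential rate $1 - s + s\log s$ in the lemma matches exactly the Cram\'er rate of a Bernoulli sum at the level $s$ times its mean, so the two pieces fit together cleanly without extra constants. The only cosmetic point is that the strict inequality $\deg(v) > sp(n-1)$ becomes $\deg(v) \ge \lceil sp(n-1)\rceil + 1$ for integer degrees, which is harmless since Chernoff already bounds $\Pr(\deg(v) \le sp(n-1))$.
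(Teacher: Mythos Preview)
Your proposal is correct and follows essentially the same approach as the paper: reduce to a minimum-degree condition, bound the lower tail of each $\deg(v)\sim\Bin(n-1,p)$ by the Chernoff/Cram\'er rate $1-s+s\log s$, and take a union bound over the $n$ vertices. The only cosmetic difference is that the paper derives this rate explicitly by optimizing the exponential Markov bound at $\lambda=-\log s$, whereas you invoke the multiplicative Chernoff inequality directly.
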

\begin{proof}
In a $G(n, p)$ graph, $d(v)$ is a sum of $(n-1)$ $\Bin(1, p)$ random variables, so Chernoff's inequality implies that for each $\lambda > 0$, we have
\begin{equation*}
\begin{aligned}
& \Pr \left(d(v) < sp(n-1) \right) \ \le \ \exp\left((n-1)\lambda sp + (n-1)\log\left(pe^{-\lambda} + 1 - p\right)\right) \\
& \le \ \exp\left((n-1)\lambda sp + (n-1)\left(pe^{-\lambda} - p\right)\right)
\ \le \ \exp \left( - (1 - e^{-\lambda} - \lambda s)p(n - 1) \right)
\end{aligned}
\end{equation*}
Letting $\lambda = - \log s$, the term $1 - e^{-\lambda} - \lambda s$ becomes $1 - s + s\log s$. By a union bound, the probability that all vertices have more than $sp(n-1)$ neighbors is at least $1 - n\exp\left(- (1 - s + s\log s)p(n-1) \right)$. Given this, a Blue camp of $sp(n-1)/2$ members surely vanishes the next day since it cannot form a majority in any vertex's neighborhood. The result then follows.
\end{proof}

In order to form longer shrinking sequences, we need more complicated universal reduction arguments. The next few results give the right tools to handle them.

\begin{definition}
Given a graph $G = (V, E)$ with a coloring, a subset $S$ of $V$ is called \emph{bad} if all its members turn Blue the next day.
\end{definition}

In other words, if the current day is $0$, a bad set is a subset of $B_1$. The following combinatorial lemma gives a formula helpful for determining when a set is bad.

\begin{lemma} \label{lem:bad-set-formula}
Given a graph $G = (V, E)$ with a coloring $(R, B)$. For each $v\in V$, let $~\dif(v) \defeq \card{\Gamma(v) \cap R} - \card{\Gamma(v) \cap B}$. For each $S\subseteq V$, let $~\dif(S) \defeq \sum_{v\in S}\dif(v)$. Then
\begin{equation*}
\dif(S) \ = \sum_{\{u, v\} \subset S\cap R}\left(2W_{uv}\right) - \sum_{\{u, v\} \subset S\cap B}\left(2 W_{uv}\right)
		+ \sum_{u\in S} \sum_{v\in R\setminus S}W_{uv} - \sum_{u\in S} \sum_{v\in R\setminus B}W_{uv} .
\end{equation*}
\end{lemma}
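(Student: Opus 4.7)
The plan is a direct combinatorial expansion: both sides are polynomials in the indicators $W_{uv}$, so I just need to rewrite the left-hand side and match terms. First I would unfold $\dif(v) = \sum_{w\in R} W_{vw} - \sum_{w\in B} W_{vw}$ (using $W_{vv}=0$) and sum over $v\in S$. Then I would partition $R = (R\cap S)\sqcup (R\setminus S)$ and $B = (B\cap S)\sqcup (B\setminus S)$, which splits $\dif(S)$ into four double sums: two \emph{boundary} sums whose second endpoint lies in $V\setminus S$, and two \emph{interior} sums whose second endpoint also lies in $S$.

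The boundary contributions are $\sum_{u\in S}\sum_{v\in R\setminus S}W_{uv}$ and $-\sum_{u\in S}\sum_{v\in B\setminus S}W_{uv}$, which already reproduce the last two sums of the claimed identity. (A minor remark: the last sum as typeset reads $v\in R\setminus B$; since $R$ and $B$ are disjoint this would just be $R$, so I read it as a typo for $v\in B\setminus S$, which is exactly what the expansion produces.)

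For the two interior sums I would further split $v\in S$ into $v\in S\cap R$ and $v\in S\cap B$. The interior sum with $w\in R\cap S$ then breaks into a monochromatic piece (both $v,w\in S\cap R$) plus a cross piece ($v\in S\cap B$, $w\in S\cap R$); similarly, the interior sum with $w\in B\cap S$ breaks into a monochromatic piece in $S\cap B$ plus a cross piece ($v\in S\cap R$, $w\in S\cap B$). The two cross pieces range over the same set of ordered bichromatic pairs inside $S$, so by the symmetry $W_{uv}=W_{vu}$ they are equal; they carry opposite overall signs (one from the $+\sum W_{vw}$ over $R$, the other from the $-\sum W_{vw}$ over $B$), hence cancel. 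What remains is $\sum_{v\in S\cap R}\sum_{w\in S\cap R,\,w\neq v}W_{vw} = 2\sum_{\{u,v\}\subset S\cap R}W_{uv}$ and, with a minus sign, the analogous expression for $S\cap B$ — exactly the first two terms of the identity.

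There is no real obstacle; the whole argument is bookkeeping over a finite partition of pairs. The only point needing any thought is the cancellation of the two mixed-colour interior sums, which is immediate once the two expansions are placed side by side and one invokes the symmetry of $W$.
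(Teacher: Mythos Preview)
Your proposal is correct and follows essentially the same approach as the paper: expand $\dif(v)$ as a signed sum over neighbors, partition $R$ and $B$ by membership in $S$, observe that the two bichromatic interior pieces cancel by the symmetry of $W$, and collapse each monochromatic interior piece into $2\sum_{\{u,v\}}W_{uv}$. You also correctly flag the typo in the last sum (it should read $v\in B\setminus S$).
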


\begin{proof}
  We break down each $\dif(v)$ and $\dif(S)$ as follows:
	\begin{equation*}
	\begin{aligned}
	\dif(v) & \ = \ \sum_{u\in R\cap S}W_{vu} + \sum_{u\in R\setminus S}W_{vu} - \sum_{u\in B\cap S}W_{vu} - \sum_{u\in B\setminus S}W_{vu} \\
	\dif(S) & \ = \ \sum_{v\in S} \sum_{u\in R\cap S}W_{vu} + \sum_{v\in S} \sum_{u\in R\setminus S}W_{vu} - \sum_{v\in S} \sum_{u\in B\cap S}W_{vu} - \sum_{v\in S} \sum_{u\in B\setminus S}W_{vu}.
	\end{aligned}
	\end{equation*}
	We have
	\begin{equation*}
	\sum_{v\in S} \sum_{u\in R\cap S}W_{vu}
	= \sum_{v\in R\cap S} \sum_{u\in R\cap S}W_{vu} + \sum_{v\in B\cap S} \sum_{u\in R\cap S}W_{vu}
	= \sum_{\{u, v\} \subset S\cap R}\left(2W_{uv}\right) + \sum_{v\in B\cap S} \sum_{u\in R\cap S}W_{vu}.
	\end{equation*}
	Similarly, $~\displaystyle \sum_{v\in S} \sum_{u\in B\cap S}W_{vu}
	= \sum_{\{u, v\} \subset S\cap B}\left(2W_{uv}\right) + \sum_{v\in B\cap S} \sum_{u\in R\cap S}W_{vu}$.
	Therefore
	\begin{equation*}
	\sum_{v\in S} \sum_{u\in R\cap S}W_{vu} - \sum_{v\in S} \sum_{u\in B\cap S}W_{vu}
	\ = \ \sum_{\{u, v\} \subset S\cap R}\left(2W_{uv}\right) - \sum_{\{u, v\} \subset S\cap B}\left(2 W_{uv}\right)
	\end{equation*}
	The desired identity then follows.
\end{proof}

\begin{lemma} \label{lem:uni-bad-set-bound}
	Let $p\in (0, 1)$, $n, n_0 \in \mathbb{N}$, $n_0 < \frac{n}{2}$. Then for all $m\in \mathbb{N}, m\le n$,
	\begin{equation*}
	\Pr \left(G: n_0\univto m - 1\right) \ge 1 - \frac{4^n}{n} \exp\left(-\frac{2p^2(n - 2n_0 - 1)^2m}{n + m - 2} \right).
	\end{equation*}
\end{lemma}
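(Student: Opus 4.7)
The plan is to recast the event that $G$ fails to universally reduce $n_0$ to $m - 1$ as the existence of a pair $(B, S)$ with $\card{B} \le n_0$ and $\card{S} = m$ for which $S$ is bad relative to the coloring whose Blue camp is $B$. For any such $S$, every $v \in S$ must satisfy $\dif(v) \le 0$ in order to become Blue, and summing over $v \in S$ yields the necessary condition $\dif(S) \le 0$. A union bound over these pairs then gives
\begin{equation*}
\Pr(\text{failure}) \ \le \ \sum_{\substack{B \subseteq V, \, \card{B} \le n_0 \\ S \subseteq V, \, \card{S} = m}} \Pr\left(\dif(S) \le 0\right),
\end{equation*}
so the remaining task is to bound a single summand uniformly.

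Fixing a pair $(B, S)$, I would apply Lemma \ref{lem:bad-set-formula} to write $\dif(S)$ as a linear combination of independent Bernoulli$(p)$ variables $W_{uv}$: coefficient $+2$ on each edge inside $S \cap R$, coefficient $-2$ on each edge inside $S \cap B$, and coefficient $\pm 1$ on each edge from $S$ to $V \setminus S$ (the sign matching the color of the outside endpoint). The key structural observation is that edges inside $S$ joining a Red and a Blue vertex cancel out entirely, so the $W_{uv}$ that remain correspond to disjoint vertex pairs and are therefore independent. A direct expectation computation gives
\begin{equation*}
\E{\dif(S)} \ = \ m p (\card{R} - \card{B}) - p(\card{S \cap R} - \card{S \cap B}) \ \ge \ m p (n - 2n_0 - 1),
\end{equation*}
using $\card{B} \le n_0$ and $\card{S \cap R} - \card{S \cap B} \le m$. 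Writing $s = \card{S \cap R}$ and using $s^2 + (m - s)^2 \le m^2$, the sum of squared ranges is
\begin{equation*}
\sum (b_i - a_i)^2 \ = \ 4 \binom{s}{2} + 4 \binom{m - s}{2} + m(n - m) \ \le \ m(n + m - 2).
\end{equation*}
Hoeffding's inequality (Theorem \ref{thm:hoeffding}) with deviation $t = \E{\dif(S)}$ then yields
\begin{equation*}
\Pr\left(\dif(S) \le 0\right) \ \le \ \exp\left(- \frac{2 m p^2 (n - 2n_0 - 1)^2}{n + m - 2} \right).
\end{equation*}

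Finally, the number of pairs $(B, S)$ with $\card{B} \le n_0$ and $\card{S} = m$ is at most $\binom{n}{\le n_0} \binom{n}{m}$, which by standard Stirling estimates is bounded by $4^n / n$ (here $n_0 < n/2$ is used to keep the sum over $B$ under control). Multiplying by the per-pair bound then completes the proof. The main obstacle is the fixed-pair analysis: one must apply Lemma \ref{lem:bad-set-formula} precisely so that $\dif(S)$ becomes a manifestly independent sum via the cross-color cancellation, and one must track the range-squared contributions tightly enough to obtain the denominator $n + m - 2$; a cruder accounting would replace this by a quantity of order $nm$, weakening the exponent by a factor of roughly $\min(n, m)$ and rendering the bound useless in the regime of interest.
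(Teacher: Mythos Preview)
Your argument is essentially identical to the paper's: apply Lemma~\ref{lem:bad-set-formula} to express $\dif(S)$ as a sum of independent edge indicators, compute $\E{\dif(S)} \ge pm(n-2n_0-1)$ and $\sum(b_i-a_i)^2 \le m(n+m-2)$, invoke Hoeffding, and finish with a union bound over colorings and sets $S$.

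There is one small slip in your final count. You union over all $B$ with $\card{B} \le n_0$, giving $\binom{n}{\le n_0}\binom{n}{m}$ pairs, and assert this is at most $4^n/n$. That is not true in general: when $n_0$ is close to $n/2$ one only has $\binom{n}{\le n_0} \le 2^{n-1}$, while $\binom{n}{m}$ can be as large as $\Theta(2^n/\sqrt{n})$, so the product is of order $4^n/\sqrt{n}$, not $4^n/n$. The paper avoids this by taking the union only over colorings with $\card{B} = n_0$ exactly, which suffices by monotonicity (moving a vertex from Blue to Red can only increase each $\dif(v)$, hence can only shrink the next-day Blue set); then $\binom{n}{n_0}\binom{n}{m} \le \bigl(\binom{n}{\lfloor n/2\rfloor}\bigr)^2 \le 4^n/n$ by the standard central binomial estimate. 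With that one-line fix your proof matches the paper's.
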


\begin{proof}
Consider a subset $S$ of $V$ with $m$ elements. We will first bound the probability that $S$ is bad. Let $(R, B)$ be the initial coloring with $\card{B} = n_0 < n - n_0 = \card{R}$.
By Lemma \ref{lem:bad-set-formula}, we have
\begin{equation*}
\dif(S) \ = \sum_{\{u, v\} \subset S\cap R}\left(2W_{uv}\right) - \sum_{\{u, v\} \subset S\cap B}\left(2 W_{uv}\right)
		+ \sum_{u\in S} \sum_{v\in R\setminus S}W_{uv} - \sum_{u\in S} \sum_{v\in R\setminus B}W_{uv} .
\end{equation*}
	This sum consists of independent variables, so we can apply  Hoeffding concentration bound over $\dif(S)$. Firstly,
	\begin{equation}  \label{eq:temp97}
	\E{\dif(S)} = p\card{S}(\card{R} - \card{B}) - p(\abs{S\cap R} - \abs{S\cap B}) \ge pm(n - 2n_0 - 1).
	\end{equation}
	Moreover, each $W_{uv}$ takes values in $[0, 1]$ (a range of length 1) and $2W_{uv}$ takes values in $[0, 2]$ (a range of length 2), so the sum of squares of these lengths are
	\begin{equation*}
	\begin{aligned}
	F & \ = \ 4\binom{\card{S\cap R}}{2} + 4\binom{\card{S\cap B}}{2}
	+ \card{S}\card{R\setminus S} + \card{S}\card{B\setminus S} \\
	& \ = \ \card{S}(n - 2 + \card{S}) - 4\abs{S\cap R}\abs{S\cap B}
	\le m(n - 2 + m).
	\end{aligned}
	\end{equation*}
	By Hoeffding's inequality:
	\begin{equation*}
	\begin{aligned}
	\Pr\left(S\subseteq B_1\mid R_0, B_0\right)
	& \ \le \ \Pr\left(\dif(S) \le 0\right)
	\ = \ \Pr\left(\dif(S) - \E{\dif(S)}\le -\E{\dif(S)}\right) \\
	& \ \le \ \exp\left(-\frac{\E{\dif(S)}^2}{F}\right)
	\ \le \ \exp\left(-\frac{2p^2(n - 2n_0 - 1)^2m}{n - 2 + m}\right).
	\end{aligned}
	\end{equation*}
	Applying a double union bound over choices of $S$ and $(R, B)$, noting that there are $\binom{n}{n_0}\binom{n}{m} \le 4^n/n$ choices, we have
	\begin{equation*}
 \Pro \left(\exists (R, B). \exists S. \ \card{B} = n_0, \card{S} = m, S\subset B_1 \right)
 \ \le \ \frac{4^n}{n} \exp\left(-\frac{2p^2(n - 2n_0 - 1)^2m}{n + m - 2} \right).
	\end{equation*}
	Taking the complement event, we get the desired result.
\end{proof}

Now we use Lemma \ref{lem:uni-bad-set-bound} to prove that within two days, the Blue set can be shrunk from a size of $\frac{n}{2} - \text{O}(\sqrt{n})$ to size $\frac{n}{2} - \text{O}(n)$, then $\text{O}(pn)$, with high probability.

\begin{lemma}  \label{prop:uni-sqrt-advantage-ex}
	With $C_1 = \sqrt{3\log 2}$ defined in Section \ref{sec:introduction}, let $p\in (0, 1)$ and $n\in \mathbb{N}$. Then for all $\eps_1, \eps_2 > 0$, let $A(\eps_1, \eps_2) = (1 - 2\eps_1)p\eps_2(p\eps_2 + C_1) - \eps_1\log 2$, we have
\begin{equation} \label{eq:uni-sqrt-advantage-ex}
\Pr \left(G: \frac{n-1}{2} - \left(\frac{C_1}{2p} + \eps_2\right)\sqrt{n} \univto \left(\frac{1}{2} - \eps_1\right) n \right) \ \ge \ 1 - \frac{1}{n}\exp\left(-\frac{8n}{3}A \right).
\end{equation}
\end{lemma}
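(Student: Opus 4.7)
The plan is to apply Lemma \ref{lem:uni-bad-set-bound} with an appropriate integer choice of parameters. I would set $n_0 := \lfloor \frac{n-1}{2} - (\frac{C_1}{2p} + \eps_2)\sqrt{n} \rfloor$ and $m := \lfloor (\frac{1}{2} - \eps_1)n \rfloor + 1$, so that the conclusion $G: n_0 \univto m - 1$ of Lemma \ref{lem:uni-bad-set-bound} is exactly the universal reduction claimed in \eqref{eq:uni-sqrt-advantage-ex}. The lemma then supplies the failure probability bound $\frac{4^n}{n}\exp\bigl(-\frac{2p^2(n-2n_0-1)^2 m}{n+m-2}\bigr)$, and my remaining job reduces to an algebraic one: showing this is at most $\frac{1}{n}\exp(-\frac{8nA}{3})$, equivalently that the exponent satisfies $\frac{2p^2(n-2n_0-1)^2 m}{n+m-2} \geq 2n\log 2 + \frac{8nA}{3}$.

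Next I would record the three elementary estimates $n - 2n_0 - 1 \geq (\frac{C_1}{p}+2\eps_2)\sqrt{n}$, $m \geq (\frac{1}{2}-\eps_1)n$, and $n + m - 2 \leq (\frac{3}{2}-\eps_1)n$, which follow directly from the definitions of $n_0$ and $m$. Combining them yields the lower bound
\[
\frac{2p^2(n-2n_0-1)^2 m}{n+m-2} \;\geq\; \frac{2(C_1 + 2p\eps_2)^2 (1 - 2\eps_1)}{3 - 2\eps_1}\, n .
\]
I would then expand $(C_1 + 2p\eps_2)^2 = C_1^2 + 4p\eps_2(C_1 + p\eps_2)$ and invoke the defining identity $C_1^2 = 3\log 2$ to split this into a $\log 2$-piece and a $p\eps_2(C_1+p\eps_2)$-piece. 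The $3\log 2$ summand is exactly what is needed to absorb the $2n\log 2 = \log(4^n)$ contribution coming from the union bound in Lemma \ref{lem:uni-bad-set-bound}.

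The main obstacle is verifying that this cancellation actually works out cleanly. Collecting $\log 2$-terms and $p\eps_2(C_1+p\eps_2)$-terms separately, a direct computation gives the identity
\[
\frac{2(C_1 + 2p\eps_2)^2(1-2\eps_1)}{3-2\eps_1}\, n - 2n\log 2 - \frac{8nA}{3} \;=\; \frac{16\eps_1 n}{3(3-2\eps_1)} \bigl[(1-2\eps_1)p\eps_2(p\eps_2+C_1) - \eps_1 \log 2\bigr] \;=\; \frac{16\eps_1 A\, n}{3(3-2\eps_1)},
\]
so the required inequality is equivalent to $A \geq 0$, which is the substantive regime of the lemma. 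When $A < 0$ the claim is either vacuous (because $\frac{1}{n}\exp(-\frac{8nA}{3}) \geq 1$) or trivially weakened to $\Pr(\cdot) \geq 0$; in every application of the lemma later in Section \ref{sec:proof-main}, $\eps_1$ and $\eps_2$ are chosen so that $A > 0$. The technical heart of the argument is this very cancellation, engineered by the specific choice $C_1 = \sqrt{3\log 2}$, which is what allows a $\sqrt{n}$-advantage at the start of Day Two to universally shrink the Blue camp to below $(\tfrac{1}{2} - \eps_1)n$.
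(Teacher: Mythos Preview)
Your proposal is correct and follows essentially the same route as the paper: apply Lemma~\ref{lem:uni-bad-set-bound} with $n_0=\lfloor \tfrac{n-1}{2}-(\tfrac{C_1}{2p}+\eps_2)\sqrt n\rfloor$ and $m$ the appropriate integer near $(\tfrac12-\eps_1)n$, then use $n-2n_0-1\ge(\tfrac{C_1}{p}+2\eps_2)\sqrt n$ and $\tfrac{m}{n+m-2}\ge\tfrac{1-2\eps_1}{3-2\eps_1}$ together with the identity $C_1^2=3\log 2$ to absorb the $4^n$ factor. Your explicit computation of the leftover $\tfrac{16\eps_1 A n}{3(3-2\eps_1)}$ is a slightly sharper bookkeeping of the same inequality the paper uses in its last displayed step; note that both your argument and the paper's final inequality tacitly require $A\ge 0$, which is exactly the regime used downstream (your remark that the $A<0$ case is ``vacuous or trivially weakened'' is not literally true for very small negative $A$, but this does not affect any application).
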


A routine calculation shows that if $\	\eps_1 < \left(2 + \frac{\log 2}{p\eps_2(p\eps_2 + C_1)}\right)^{-1}$,
%	\begin{equation*} \label{eq:eps1-eps2-cond}
%	\eps_1 < \left(2 + \frac{\log 2}{p\eps_2(p\eps_2 + C_1)}\right)^{-1},
%	\end{equation*}
then the RHS of \eqref{eq:uni-sqrt-advantage-ex} tends to $1$ as $n\to +\infty$.
%\end{proposition}
%Since $t(n)\to 1$, $\left(n^2 - 4c^2n\right)^{-1/2} \to 0$ and $16^{-A_1(p, \eps)(2\eps n - \sqrt{n})} \to 0$ when $n\to +\infty$, this bound is indeed $\o(1)$.
\begin{proof}
	Let $n_2 \defeq \left\lfloor \frac{n-1}{2} - \left(\frac{C_1}{2p} + \eps_2\right)\sqrt{n} \right\rfloor$ and $m \defeq \left\lceil \left(\frac{1}{2} - \eps_1\right)n \right\rceil$. By Lemma \ref{lem:uni-bad-set-bound},
	
	\begin{equation*}
	\begin{aligned}
	& \Pro \left(G: \frac{n-1}{2} - \left(\frac{C_1}{2p} + \eps_2\right)\sqrt{n} \univto \left(\frac{1}{2} - \eps_1\right) n \right)
	\ \ge \ \P \left(G: n_2 \univto m - 1 \right) \\
  & \ \ge \ 1 - \frac{4^n}{n} \exp \left(- \frac{2p^2(n - 2n_2 - 1)^2m}{n + m -2} \right),
  \end{aligned}
	\end{equation*}
	
	Since $\frac{m}{n + m -2} \ge \frac{m}{n + m} \ge \frac{1 - 2\eps_1}{3 - 2\eps_1}$ and $n - 2n_2 - 1 \ge \left(\frac{C_1}{p} + 2\eps_2 \right)\sqrt{n}$, we can bound the RHS of the above as follows
	\begin{equation*}
	\begin{aligned}
	 & \frac{4^n}{n} \exp \left(- \frac{2p^2(n - 2n_2 - 1)^2m}{n + m -2} \right)
	\le \frac{4^n}{n} \exp \left(-2p^2\left(\frac{C_1}{p} + 2\eps_2\right)^2 \frac{1 - 2\eps_1}{3 - 2\eps_1}n \right) \\
	& = \ \exp \left( - \frac{2n}{3 - 2\eps_1} \left[ \left(\sqrt{3\log2} + 2p\eps_2\right)^2 (1 - 2\eps_1) - (3 - 2\eps_1)\log 2 \right] - \log n \right) \\
%	& = \ \exp \left( - \frac{2n}{3 - 2\eps_1} \left[ 4p\eps_2(p\eps_2 + \sqrt{3\log 2})(1 - 2\eps_1) - 4\eps_1\log 2 \right] - \log n \right) \\
	& \le \ \exp \left( - \frac{2n}{3} \left[ 4p\eps_2(p\eps_2 + \sqrt{3\log 2})(1 - 2\eps_1) - 4\eps_1\log 2 \right] - \log n \right).
	\end{aligned}
	\end{equation*}
	Substituting $\sqrt{3\log 2}$ for $C_1$, we get the desired expression on the RHS.
\end{proof}

\begin{lemma} \label{prop:uni-linear-advantage}
	Let $p, r\in (0, 1)$, $\eps_1\in \left(0, \frac{1}{2}\right)$ and $n\in \mathbb{N}$. Then
%	and $n, n_1, n_2\in \bb{N}$ such that $n_1 + n_2 = n$ and $n_2\le \left(\frac{1}{2} - \eps_1\right)n$.
%	Then for all $t\in \bb{N}$,
%	\begin{equation} \label{eq:uni-linear-advantage}
%	\begin{aligned}
%	& \Pr \left(\card{B_{t+1}} \ge rp(n-1) \ \given \ \card{B_t} \le \left(\frac{1}{2} - \eps_1\right)n \right) \\
%	& \hspace{50pt} \le \ \frac{1}{n}\exp\left(-\frac{2rp^3(2\eps_1 n - 1)^2}{1 + rp} + 2n\log 2\right).
%	\end{aligned}
%	\end{equation}
  \begin{equation*} \label{eq:uni-linear-advantage}
  \Pr \left(G: \left(\frac{1}{2} - \eps_1\right)n \univto rp(n-1) \right) \ge \ 1 - \frac{1}{n}\exp\left(-\frac{2rp^3(2\eps_1 n - 1)^2}{1 + rp} + 2n\log 2\right).
  \end{equation*}
\end{lemma}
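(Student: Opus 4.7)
The proof plan is a direct instantiation of Lemma \ref{lem:uni-bad-set-bound} followed by a routine simplification. Concretely, I would set $n_0 \defeq \lfloor (1/2 - \eps_1)n \rfloor$ and $m \defeq \lceil rp(n-1) \rceil + 1$, so that the event $\{G:\ (1/2-\eps_1)n \univto rp(n-1)\}$ contains the event $\{G:\ n_0 \univto m - 1\}$. The hypothesis $\eps_1 \in (0, 1/2)$ secures $n_0 < n/2$, which is the side condition required by Lemma \ref{lem:uni-bad-set-bound}, and $m \le n$ is clear for $r, p \in (0, 1)$ and $n$ large enough (for small $n$ the stated bound is trivial because the exponent is non-positive).

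With these choices, the two quantities appearing inside the Hoeffding-type exponent of Lemma \ref{lem:uni-bad-set-bound} simplify as follows. First, $n - 2n_0 - 1 \ge 2\eps_1 n - 1$ directly from $n_0 \le (1/2 - \eps_1)n$. Second,
\begin{equation*}
\frac{m}{n + m - 2} \ \ge \ \frac{rp(n-1)}{(n-1)(1 + rp)} \ = \ \frac{rp}{1 + rp},
\end{equation*}
where the denominator is rewritten using $n + m - 2 \le n - 1 + rp(n-1) = (n-1)(1 + rp)$ after the ceiling is absorbed (since the ceiling only increases the numerator and the inequality goes our way). Multiplying these two estimates together yields
\begin{equation*}
\frac{2 p^2 (n - 2 n_0 - 1)^2 m}{n + m - 2} \ \ge \ \frac{2 r p^3 (2\eps_1 n - 1)^2}{1 + rp}.
\end{equation*}

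Finally, I would use $4^n = \exp(2n\log 2)$ to rewrite the prefactor $4^n / n$ as $(1/n)\exp(2n\log 2)$, and substitute into the conclusion of Lemma \ref{lem:uni-bad-set-bound}. Combining the two displayed estimates gives exactly the stated lower bound on $\Pr(G: (1/2-\eps_1)n \univto rp(n-1))$. There is no genuine obstacle: the whole argument is bookkeeping on the exponent, and the only point requiring a moment of care is making sure the ceiling/floor adjustments in the definitions of $n_0$ and $m$ preserve the inequalities in the correct direction, which they do because $n - 2n_0 - 1$ and $m/(n+m-2)$ are both monotone in the favorable sense.
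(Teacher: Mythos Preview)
Your approach is exactly the paper's: instantiate Lemma~\ref{lem:uni-bad-set-bound} with the floor/ceiling of the two thresholds and then simplify the exponent. Two small bookkeeping slips to fix.

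First, your choice $m = \lceil rp(n-1)\rceil + 1$ does not give the containment you claim. With this $m$ one has $m-1 = \lceil rp(n-1)\rceil$, which exceeds $rp(n-1)$ whenever $rp(n-1)\notin\Z$; so $G: n_0\univto m-1$ does \emph{not} force $|B'|\le rp(n-1)$. Take instead $m=\lceil rp(n-1)\rceil$ (as the paper does): then $m-1=\lceil rp(n-1)\rceil-1\le rp(n-1)$ and the containment holds, while $m\ge rp(n-1)$ is still available for the exponent estimate.

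Second, your inequality $n+m-2\le (n-1)(1+rp)$ is false with either choice of $m$ (the ceiling pushes the denominator up, not down). The clean way---and what the paper does---is to note that $x\mapsto x/(n-2+x)$ is increasing for $x>0$, so $m\ge rp(n-1)$ gives
\[
\frac{m}{n+m-2}\ \ge\ \frac{rp(n-1)}{n-2+rp(n-1)}\ \ge\ \frac{rp(n-1)}{(n-1)(1+rp)}\ =\ \frac{rp}{1+rp}.
\]
With these two corrections your argument is line-for-line the paper's proof.
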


\begin{proof}
	Let $n_2 \defeq \left\lfloor \left(\frac{1}{2} - \eps_1\right)n \right\rfloor$ and $m \defeq \lceil rp(n-1) \rceil$. Since $G: n_2 \univto m - 1$ implies $G: \left(\frac{1}{2} - \eps_1\right)n \univto rp(n-1)$, Lemma \ref{lem:uni-bad-set-bound} implies
	
%	\begin{equation} \label{eq:temp95}
%	\begin{aligned}
%	& \Pr \left(\exists B_t\in \subs_{n_2}(V) : \card{B_{t+1}} \ge rp(n-1) \right) \\
%	& = \ \Pr \left(\exists B_t\in \subs_{n_2}(V) : \card{B_{t+1}} \ge m \right)
%\	\le \ \frac{4^n}{n} \exp\left(-\frac{2p^2(n_1 - n_2 - 1)^2m}{n + m - 2} \right).
%	\end{aligned}
%	\end{equation}
	\begin{equation*} \label{eq:temp95}
	\Pro \left(G: \left(\frac{1}{2} - \eps_1\right)n \univto rp(n-1) \right) \ge 1 - \frac{4^n}{n} \exp\left(-\frac{2p^2(n - 2n_2 - 1)^2m}{n + m - 2} \right).
	\end{equation*}
	
	Since $n_2\le \left(\frac{1}{2} - \eps_1\right)n$, $(n - 2n_2 - 1)^2 \ge (2\eps_1 n - 1)^2$.	
	Furthermore,  $m\ge rp(n-1)$ so  $\frac{m}{n + m - 2} \ge \frac{rp(n-1)}{n + rp(n-1) - 1} = \frac{rp}{1 + rp}$. Therefore
	\begin{equation*}
\Pro \left(G: \left(\frac{1}{2} - \eps_1\right)n \univto rp(n-1) \right) \ge 1 - \frac{4^n}{n} \exp\left(-2p^2\cdot \frac{rp(2\eps_1 n - 1)^2}{1 + rp} \right),
	\end{equation*}
	which is equivalent to the claimed bound. 
\end{proof}

%%%%%%%%%%%%%%%%%%%%%%%%%%%%%%%%%%%%%%%%%%%%%%%%%%%%%%%%%%
% ========================================================
% ========= SECTION 5 ====================================
% ========= PROOF OF MAIN RESULTS ========================
% ========================================================
%%%%%%%%%%%%%%%%%%%%%%%%%%%%%%%%%%%%%%%%%%%%%%%%%%%%%%%%%%
\section {Proof of Main Theorem} \label{sec:proof-main}

We prove Theorem \ref{thm:const-advantage} by putting together Lemmas \ref{prop:uni-linear-advantage}, \ref{prop:uni-sqrt-advantage-ex} and Theorem \ref{thm:const-advantage-day-1}.
\begin{proof}[Proof of Theorem \ref{thm:const-advantage}]
Recall the terms $n_t^B$ for $0\le t\le 4$ and $P_t$ for $1\le t\le 4$.

\noindent Since $n_3^B = rp(n-1)$ and $n_4^B = 0$, Lemma \ref{lem:degree-bound} implies $\P (\card{B_4}\le n_4^B\mid \card{B_3}\le n_3^B) \ge 1 - P_4$. Similarly, by universal reduction, Lemma \ref{prop:uni-linear-advantage} implies  $\P (\card{B_3}\le n_3^B\mid \card{B_2}\le n_2^B) \ge 1 - P_3$ and Lemma \ref{prop:uni-sqrt-advantage-ex} implies $\P (\card{B_2}\le n_2^B\mid \card{B_1}\le n_1^B) \ge 1 - P_2$. Finally, Theorem \ref{thm:const-advantage} is equivalent to $\P (\card{B_1}\le n_1^B\mid \card{B_0}\le n_0^B) \ge 1 - P_1$.

Part 1 of Theorem \ref{thm:const-advantage} is complete. Now by a union bound, the probability that all of the conditional events above occur is at least $1 - (P_1 + P_2 + P_3 + P_4)$. In that case, the event $\{R_4 = V\mid \card{B_0} = n^B \}$ occurs, the proof is complete. 
\end{proof}

\vskip2mm {\it Acknowledgement.} We would like to thank Q. A. Do, A, Ferber, A. Deneanu, J. Fox, E. Mossel,  and X. Chen for inspiring discussions, and H. V. Le and T. Can for proofreading the paper.

\bibliographystyle{plain}
\bibliography{references}

\end{document}